\newtheorem{theorem}{Theorem}[section]
\newtheorem{proposition}[theorem]{Proposition}
\newtheorem{lemma}[theorem]{Lemma}
\newtheorem{definition}[theorem]{Definition}
 \newtheorem{remark}[theorem]{Remark}
\newtheorem{example}[theorem]{Example}
\numberwithin{equation}{section}
\begin{document}
 	
\baselineskip=15pt

\title[On the Seshadri constants of equivariant bundles]{On the Seshadri constants of
equivariant bundles over Bott-Samelson varieties and wonderful compactifications}

\author[I. Biswas]{Indranil Biswas}

\address{School of Mathematics, Tata Institute of Fundamental
Research, Homi Bhabha Road, Mumbai 400005, India}

\email{indranil@math.tifr.res.in}

\author[K. Hanumanthu]{Krishna Hanumanthu}

\address{Chennai Mathematical Institute, H1 SIPCOT IT Park, Siruseri, Kelambakkam 603103, 
India}

\email{krishna@cmi.ac.in}

\author[S. S. Kannan]{S. Senthamarai Kannan}

\address{Chennai Mathematical Institute, H1 SIPCOT IT Park, Siruseri, Kelambakkam 603103, 
India}

\email{kannan@cmi.ac.in}

\subjclass[2010]{14C20, 14J60, 14L30,14M15}

\keywords{Bott-Samelson variety, wonderful compactification, 
Seshadri constant, nef cone, ample cone}

\date{March 5, 2023}

\begin{abstract}
We study torus-equivariant vector bundles $E$ on a complex projective variety $X$ which is 
either a Bott-Samelson-Demazure-Hansen variety or a wonderful compactification of a 
complex symmetric variety of minimal rank. We show that $E$ is nef (respectively, ample) if 
and only if its restriction to every torus--invariant curve in $X$ is nef (respectively, 
ample). We also compute the Seshadri constants $\varepsilon(E,x)$, where $x\, \in\, X$ is 
any point fixed by the action of a maximal torus.
\end{abstract}

\maketitle
 	
\section{introduction}

Let $X$ be a complex projective variety. A vector bundle $E$ on $X$ is said to be nef (respectively, ample) if $\mathcal{O}_{\mathbb{P}(E)}(1)$ is 
a nef (respectively, ample) line bundle on the projective bundle $\mathbb{P}(E)$ over $X$. If $E$ is an ample vector bundle then it follows 
easily that the restriction of $E$ to any curve in $X$ is also ample. But the converse is not true in general, even when $E$ has rank one 
and $X$ has dimension two as shown by an example of Mumford (cf. \cite[Example 10.6]{Ha2} or \cite[Example 1.5.2]{La1}).

Nevertheless it is natural to ask the following question in specific situations: if the 
restriction of $E$ to every curve in $X$ is ample, then under what conditions is $E$ 
itself ample?

This question has been studied in several situations. An affirmative answer is given when 
$E$ is a torus-equivariant vector bundle either on a toric variety \cite{HMP} or on a 
generalized flag variety \cite{BHN}. An affirmative answer is also given when $E$ is any 
line bundle on a flag variety over a projective curve defined over the algebraic closure 
of a finite field \cite{BMP}. A related question was studied in \cite{BKN} for equivariant 
vector bundles on wonderful compactifications.

In this paper, we study the question and are able to give an affirmative answer in two different cases: 
Bott-Samelson-Demazure-Hansen (BSDH) varieties and wonderful compactification of symmetric varieties of minimal rank. 
Bott-Samelson-Demazure-Hansen varieties are natural desingularizations of Schubert varieties. They were first introduced by Bott and 
Samelson in differential geometric and topological context (see \cite{BS}). Demazure in \cite{Dem} and Hansen in \cite{Han} 
independently adapted the construction in algebro-geometric situation. The wonderful 
compactifications of symmetric spaces were constructed by De Concini and Procesi in \cite{DP}. For a semisimple adjoint group $G$, 
the wonderful compactification of $G=G\times G/\Delta(G)$ is an example of a complete symmetric space of minimal rank.

Seshadri constants for line bundles on projective varieties were introduced by Demailly in 
\cite{De90}. Let $X$ be a projective variety, and let $L$ be a nef line bundle on $X$. For 
a point $x\in X$, the {\it Seshadri constant} of $L$ at $x \in X$ is defined as: 
$$\varepsilon(X,L,x):= \inf\limits_{\substack{x \in C}} \frac{L\cdot C}{{\rm 
mult}_{x}C},$$ where the infimum is taken over all irreducible and reduced curves $C$ 
on $X$ passing through $x$.

Seshadri constants are useful in studying positivity questions and they are a focus of a 
lot of current research. In recent years, there has been significant work on computing 
Seshadri constants for line bundles on projective varieties, especially on 
surfaces. See \cite{B} for a detailed survey on these works.

While most of the works on Seshadri constants have considered the case of line
bundles, there has been recent interest in Seshadri constants for vector bundles of
arbitrary rank. An explicit definition of these was first given 
by Hacon \cite{Hac}; see Definition
\ref{sc-vb} below. 
For a comprehensive survey and a generalization to the relative setting, see \cite{FM}. 

In this paper, we compute the Seshadri constants of equivariant vector bundles at 
torus-fixed points in the two cases that we study.

In Section \ref{prelims}, we recall basic definitions, constructions and prove some 
preparatory lemmas. In Section \ref{main-results}, we prove our main results about 
positivity of equivariant vector bundles and their Seshadri constants. We show that a 
torus-equivariant vector bundle $E$ on a BSDH variety or on a wonderful compactification 
of a symmetric variety of minimal rank is ample if and only if its restriction $E|_C$ to every 
torus-invariant curve $C$ is ample (see Theorem \ref{thm-bsdh} and Theorem 
\ref{thm-wonderful}). We also compute the Seshadri constants of such vector bundles at 
torus-fixed points (Theorem \ref{sc-bsdh} and Theorem \ref{sc-wonderful}).

Each section is divided into two subsections dealing with the two cases that we study.

We work throughout over the field $\mathbb{C}$ of complex numbers. The field of real 
numbers is denoted by $\mathbb{R}$.

\section{preliminaries}\label{prelims}

In this section, we recall basic definitions and prove some preparatory results. 

\subsection{BSDH varieties}\label{prelims-bsdh}

Let $G$ be a semisimple and simply connected algebraic group defined over $\mathbb{C}$. We fix a maximal torus $T$ of $G$. The \textit{rank} of $G$ is defined to be the dimension of $T$.
 Let $W \,=\, N_G(T)/T$ be the Weyl group of $G$ with respect to $T$.
We denote by $R$ the set of roots of $G$ with respect to $T$ and by
$B$ a Borel subgroup of $G$ containing $T$, and $R^{+}\subset R$ the set of positive roots with respect to $B$. 
Let $$S \,=\, \{\alpha_1,\,\cdots,\,\alpha_n\}$$
be the set of all simple roots in $R^{+}$, where $n$ is the rank of $G$. 
The simple reflection in the Weyl group corresponding to a simple root $\alpha$ is denoted
by $s_{\alpha}$. For simplicity of notation, the simple reflection corresponding to a simple root $\alpha_{i}$ is denoted
by $s_{i}$. For a subset $J\subseteq S$, we denote the subgroup of $W$ generated by $s_i$, $\alpha_i\in J$ by $W_{J}$. For every subset $J\subseteq S$, there is a unique parabolic subgroup $P_J$ of $G$ containing $B$ whose Weyl group is $W_J$. Conversely, every parabolic subgroup of $G$ containing $B$ is of the form $P_J$ for some $J\subseteq S$; see \cite[8.4.3. Theorem, Page 147]{Spr}.

We denote the minimal parabolic subgroup $P_{\{\alpha_{i}\}}$ by $P_{\alpha_i}$.
For any subset $J\,\subseteq \,S$, let
$$W^{J}\,:=\,\{w\,\in\, W\,\, \mid\,\, w(\alpha)\,\in\, R^{+}\ \, \forall \ \, \alpha \,\in \, J\}.$$ For any $w
\,\in\, W$, set $R^{+}(w)\,:=\,\{\alpha \in R^{+}\,\,\big\vert\,\, w(\alpha)\,\in\, R^{-}\}$.

By $\mathfrak{g}$ (respectively, $\mathfrak{t}$)
we denote the Lie algebra of $G$ (respectively, $T$); similarly, the Lie algebra of $B$ is denoted
by $\mathfrak{b}$. Let $X(T)$ denote the group of all characters of $T$. 

For a root $\alpha,$ we denote by 
$U_{\alpha}$ (respectively, $\mathfrak{g}_{\alpha}$) the one-dimensional 
$T$-invariant root subgroup of $G$ (respectively, 
subspace of $\mathfrak{g}$) on which $T$ acts through the character $\alpha.$ 

For any $w \,\in\, W$, let $$X(w)\,:=\,\overline{BwB/B}\, \subset\, G/B$$ be the corresponding
Schubert variety in $G/B$. Given a reduced expression 
$w\,=\, s_{{i_1}}s_{{i_2}}\cdots s_{{i_r}}$ of $w$, the Bott-Samelson-Demazure-Hansen variety
\begin{equation}\label{ebs}
Z(w,\,{\underline i})
\end{equation}
is a desingularization of the Schubert variety $X(w)$,
where $\underline i\,:=\,(i_1,\, \cdots,\, i_r)$.

Then $Z(w,\,{\underline i})$ in \eqref{ebs}
is defined as $$Z(w,\,\underline{i})\,=\, \frac{P_{\alpha_{i_1}}\times 
P_{\alpha_{i_2}}\times\cdots \times P_{\alpha_{i_{r}}}}{B\times B\times \cdots \times B},$$
where the action of $\underbrace{B\times B\times \cdots \times B}_{r\text{ copies }}$ on $P_{\alpha_{i_{1}}}\times P_{\alpha_{i_{2}}}\times\cdots\times P_{\alpha_{i_{r}}}$ 
is given by
$$(p_{1},\, p_{2},\, \cdots ,\, p_{r})(b_{1},\, b_{2},\, \cdots,\, b_{r})
\,=\,(p_{1}\cdot b_{1},\, b_{1}^{-1} \cdot p_{2}\cdot b_{2},\, \cdots,\, b_{r-1}^{-1}\cdot p_{r}\cdot b_{r})$$
for $p_{j}\,\in \, P_{\alpha_{i_{j}}}$, $b_{j}\,\in\, B$ and $\underline{i}\,=\,(i_{1},\,i_{2},\,\cdots,\,i_{r})$
(see \cite[Definition 1, Page 73]{Dem}, \cite[Definition 2.2.1, Page 64]{Bri}).
We recall that $Z(w,\,\underline{i})$ is a smooth projective variety.

Let $\pi\,:\,P_{\alpha_{i_{1}}}\times P_{\alpha_{i_{2}}}\times\cdots\times P_{\alpha_{i_{r}}}\,\longrightarrow\, Z(w,\, \underline{i})$ be the 
natural morphism. Let 
\begin{equation}\label{ebs2}
\phi_{w}\, \, :\,\, Z(w,\, \underline{i})\, \longrightarrow\, X(w)
\end{equation}
be the natural birational surjective morphism; see \cite[Page 67]{Bri} for more details. 

For the sake of simplicity, we will refer to any Bott-Samelson-Demazure-Hansen 
variety simply as a BSDH variety. For more details on BSDH varieties, see \cite{BS,Dem,Han}. 

Let
\begin{equation}\label{ebs3}
f_{r}\,:\,Z(w,\, \underline{i})\,\longrightarrow \,Z(ws_{i_{r}},\, \underline{i'})
\end{equation}
be the morphism induced by the projection
$$P_{\alpha_{i_{1}}}\times P_{\alpha_{i_{2}}}\times\cdots\times P_{\alpha_{i_{r}}}
\,\longrightarrow\, P_{\alpha_{i_{1}}}\times P_{\alpha_{i_{2}}}\times \cdots \times P_{\alpha_{i_{r-1}}},$$
where $\underline{i'}
\,=\,(i_{1},\,i_{2},\,\cdots,\, i_{r-1}).$ The map $f_{r}$ in \eqref{ebs3} is in fact a $P_{\alpha_{i_{r}}}/B\,\simeq\, \mathbb{P}^1$
fibration. There is a natural $B$--equivariant imbedding 
\begin{equation}\label{be}
Z(w,\, \underline{i})\,\hookrightarrow \,Y \,=\, Y(w,\,\underline{i})\,:=\, G/P_{S\setminus\{\alpha_{i_1}\}}\times 
G/P_{S\setminus\{\alpha_{i_2}\}}\times \cdots\times G/P_{S\setminus\{\alpha_{i_r}\}}\, ;
\end{equation}
see \cite[Theorem 1(ii), Page 608]{M}.

For any $\beta\,\in\, R^{+}$, let $C_{\beta}\,:=\,\overline{U_{\beta}s_{\beta}B/B}\,\subset\, G/B$.
For any $v\,\in\, W^{S\setminus\{\alpha_{i_j}\}}$ and $\beta\,\in\, R^{+}(v^{-1})$, let $$C_{\beta, v, j}\,:=\,
\overline{U_{\beta}vP_{S\setminus\{\alpha_{i_j}\}}/P_{S\setminus\{\alpha_{i_j}\}}}\,\subset\, 
G/P_{S\setminus\{\alpha_{i_j}\}}\, .$$

Note that $C_{\beta}$ is isomorphic to the curve $C_{z,\beta}$ in the notation of Lemma \ref{lem-wonderful} for the special case of wonderful compactification of a semisimple adjoint group. So $C_{\beta}$ is 
isomorphic to the projective line $\mathbb{P}^1$ for every $\beta$ by Lemma \ref{lem-wonderful}. 

Note that by Bruhat decomposition (see \cite[8.3.8 Theorem, Page 145]{Spr}), every $T$--invariant curve in 
$G/P_{S\setminus \{\alpha_{i_j}\}}$ is of the form $C_{\beta, v, j}$ for some $v\in W^{S\setminus \{\alpha_{i_j}\}}\setminus \{1\}$ and $\beta\in R^{+}(v^{-1})$.

Let $v\,\in\, W^{S\setminus \{\alpha_{i_j}\}}$. For any $\beta\,\in\, R^{+}(v^{-1})$, 
there is an isomorphism $$\tau_{\beta, v, j}\,:\,U_{\beta}s_{\beta}B/B\,\longrightarrow\,
U_{\beta}vP_{S\setminus \{\alpha_{i_{j}}\}}/ P_{S\setminus \{\alpha_{i_{j}}\}}$$ such that $\tau_{\beta, v, j}(gs_{\beta}B/B)\,=\, 
gvP_{S\setminus \{\alpha_{i_{j}}\}}/P_{S\setminus \{\alpha_{i_{j}}\}}$ for all $g\,\in\, U_{\beta}$. Now, $\tau_{\beta, v, j}$
can be extended uniquely to an isomorphism $$\sigma_{\beta, v, j}\,:\,C_{\beta}\,\longrightarrow\, C_{\beta, v, j},$$ by the valuative criterion.

Let $\pi_{j}\,:\,Y\,\longrightarrow\, G/P_{S\setminus\{\alpha_{i_j}\}}$ be the projection to the $j$--th factor
in \eqref{be}.

\begin{lemma}Let $\beta\in R^{+}$.
Let $v:= (v_1,\, v_2,\, \cdots ,\, v_r)\,\in\, W^{S\setminus \{\alpha_{i_1}\}}\times W^{S\setminus \{\alpha_{i_1}\}}
\times \cdots\times W^{S\setminus \{\alpha_{i_r}\}}$ be such that there is an integer $1\,\leq\, j \,\leq\, r$ for which 
$v_j^{-1}(\beta)$ is negative. Let $A\,\subset \,\{1,\, 2,\, \cdots ,\, r\}$ be a non-empty
subset such that $\beta\,\in\, \bigcap_{j\in A}R^{+}(v_j^{-1})$. Then there is a unique $T$--equivariant morphism
$$\phi_{\beta, v, A}\,\,:\,\,C_{\beta}\,\longrightarrow\, Y$$ (see \eqref{be}) satisfying the following conditions:
\begin{itemize}
\item $\pi_j\circ \phi_{\beta, v,A}\,=\,\sigma_{\beta, v_j, j}$ for every $j\,\in\, A$, and

\item $\pi_j\circ \phi_{\beta, v,A}\,=\,v_{j}P_{S\setminus \{\alpha_{i_{j}}\}}/P_{S\setminus \{\alpha_{i_{j}}\}}$
for every $j\,\in\, \{1,\, 2,\, \cdots,\, r\}\setminus A$.
\end{itemize}
\end{lemma}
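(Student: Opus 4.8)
The plan is to build the map $\phi_{\beta, v, A}$ componentwise and then check $T$-equivariance and uniqueness. First I would define, for each index $j$, a morphism $\psi_j \colon C_\beta \longrightarrow G/P_{S\setminus\{\alpha_{i_j}\}}$ as follows: for $j \in A$ we set $\psi_j := \sigma_{\beta, v_j, j}$, which is available by the previous lemma precisely because $\beta \in R^+(v_j^{-1})$; for $j \notin A$ we take $\psi_j$ to be the constant map with value $v_j P_{S\setminus\{\alpha_{i_j}\}}/P_{S\setminus\{\alpha_{i_j}\}}$. Since $G/B \to G/P_{S\setminus\{\alpha_{i_j}\}}$ and each $\sigma_{\beta, v_j, j}$ is a morphism, and a constant map is trivially a morphism, each $\psi_j$ is a morphism of varieties. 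By the universal property of the product $Y$ in \eqref{y}, the tuple $(\psi_1, \dots, \psi_r)$ defines a unique morphism $\phi_{\beta, v, A} \colon C_\beta \to Y$ with $\pi_j \circ \phi_{\beta, v, A} = \psi_j$ for all $j$, which is exactly the pair of conditions in the statement. Uniqueness of $\phi_{\beta, v, A}$ among all morphisms satisfying these conditions is then immediate from the uniqueness in the universal property of the product.

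Next I would verify $T$-equivariance. Here the $T$-action on $C_\beta$ is the restriction of the $T$-action on $G/B$, and the $T$-action on $Y$ is the diagonal one coming from left multiplication on each factor. For $j \in A$, the map $\sigma_{\beta, v_j, j}$ is $T$-equivariant by the previous lemma, so $\pi_j \circ \phi_{\beta, v, A}$ is $T$-equivariant. For $j \notin A$, I need to check that the constant map with value $v_j P_{S\setminus\{\alpha_{i_j}\}}/P_{S\setminus\{\alpha_{i_j}\}}$ is $T$-equivariant; equivalently, that this point is $T$-fixed in $G/P_{S\setminus\{\alpha_{i_j}\}}$. This holds because $v_j$ normalizes $T$ (it is represented in $N_G(T)$) and $T \subset P_{S\setminus\{\alpha_{i_j}\}}$, so $t \cdot v_j P = v_j (v_j^{-1} t v_j) P = v_j P$ for all $t \in T$. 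Since each component $\pi_j \circ \phi_{\beta, v, A}$ is $T$-equivariant and the $T$-action on $Y$ is componentwise, $\phi_{\beta, v, A}$ itself is $T$-equivariant. (The hypothesis that $v_j^{-1}(\beta)$ is negative for some $j$ is not needed for existence or equivariance; it is presumably recorded because it forces $A$ to be a subset of a nonempty set, or it will be used in the sequel.)

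The only genuinely delicate point is that each $\sigma_{\beta, v_j, j}$ really is defined on all of $C_\beta$ and is a morphism, but that is exactly the content of the preceding lemma, which I am entitled to invoke. So there is no serious obstacle here: once the componentwise maps are in place, the universal property of the product does all the work for existence, uniqueness, and the two bulleted identities, and $T$-equivariance reduces to the equivariance of the $\sigma_{\beta, v_j, j}$ together with the observation that the $v_j$-coset points are $T$-fixed. I would present the argument in essentially this order: construct the $\psi_j$, assemble $\phi_{\beta, v, A}$ via the product, read off the two conditions, note uniqueness, then check $T$-equivariance factor by factor.
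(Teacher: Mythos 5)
Your proposal is correct and follows essentially the same route as the paper, which simply defines $\phi_{\beta,v,A}$ componentwise by $x_j=\sigma_{\beta,v_j,j}(x)$ for $j\in A$ and $x_j=v_jP_{S\setminus\{\alpha_{i_j}\}}/P_{S\setminus\{\alpha_{i_j}\}}$ otherwise. Your additional verifications (universal property of the product, $T$-fixedness of the points $v_jP_{S\setminus\{\alpha_{i_j}\}}$) are details the paper leaves implicit, and they are all accurate.
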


\begin{proof}
For $x\,\in\, C_{\beta}$, define $\phi_{\beta, v,A}(x)\,=\,(x_1,\, x_2, \,\cdots,\, x_r)$, where $x_j\,=\,\sigma_{\beta, v_j, j}(x)$ if
$j\,\in\, A$, and $x_j\,=\,v_{j}P_{S\setminus \{\alpha_{i_{j}}\}}/P_{S\setminus \{\alpha_{i_{j}}\}}$ if $j\,\in
\,\{1,\, 2,\, \cdots,\, r\}\setminus A$.
\end{proof} 

\begin{lemma}\label{lem-bsdh}
For any $T$--invariant curve $C$ in $Y$ (see \eqref{be}), there exist the following:
\begin{enumerate}
\item $v_j\,\in\, W^{S\setminus \{\alpha_{i_j}\}}$ and $t_{j}\,\in\, T$ for every $1\,\leq\, j \,\leq\, r$,

\item a non-empty subset $A\,\subset\, \{1,\, 2,\, \cdots,\, r\}$, and

\item a root $\beta\,\in\, \bigcap_{j\in A}R^{+}(v_{j}^{-1})$,
\end{enumerate} 
such that 
$$C\,=\,(t_1,\, t_2,\, \cdots,\, t_r).\phi_{\beta,v,A}(C_{\beta}),$$
where $v\,=\,(v_1,\, v_2,\, \cdots,\, v_r).$
\end{lemma}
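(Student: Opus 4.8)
The plan is to realize $C$ as the closure of a single one-dimensional $T$-orbit, to organize its images under the projections $\pi_j\colon Y\to G/P_j$ (where $P_j:=P_{S\setminus\{\alpha_{i_j}\}}$ and $W^{P_j}:=W^{S\setminus\{\alpha_{i_j}\}}$), and then to recognize $C$ as a ``twisted diagonal'' inside the product of those projected curves. First I would note that the $T$-fixed locus $Y^{T}=\prod_j\{\,vP_j/P_j\mid v\in W^{P_j}\,\}$ is finite, so no positive-dimensional subvariety of $Y$ is pointwise $T$-fixed; hence the irreducible $T$-invariant curve $C$ contains a dense one-dimensional orbit $O$, and $C=\overline O$. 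Fix $c=(c_1,\dots,c_r)\in O$ and put $A:=\{\,j\mid c_j\text{ is not }T\text{-fixed}\,\}$. Since $O$ is one-dimensional and $O\subseteq\prod_j\overline{Tc_j}$, the set $A$ is non-empty; for $j\notin A$ write $c_j=v_jP_j/P_j$ with $v_j\in W^{P_j}$ and set $t_j:=e$.

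Next I would analyze the factors $j\in A$. For such $j$, the image $\pi_j(C)$ is a one-dimensional $T$-invariant curve in $G/P_j$; it passes through exactly two $T$-fixed points, and at such a point its tangent line is a $T$-eigenline, with weight some root $\gamma$. The curve is then the closure of the $U_{\gamma}$-orbit of that fixed point, so it coincides with $C_{\gamma,v,j}$ for a suitable $v\in W^{P_j}$ with $\gamma\in R^{+}(v^{-1})$, and it carries the $T$-equivariant parametrization $\sigma_{\gamma,v,j}$. The key point to establish is that a single root $\beta\in R^{+}$ works for all $j\in A$. For this I would argue with isotropy groups: if $T_c$ denotes the isotropy of $c$, then $\dim T/T_c=1$, while for $j\in A$ the isotropy $T_{c_j}$ contains $T_c$ and also satisfies $\dim T/T_{c_j}=1$, so $T_{c_j}$ and $T_c$ have a common identity component $T^{\circ}$. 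Since $T$ acts on $\pi_j(C)$ near the chosen fixed point through the tangent weight $\pm\gamma_j$, one gets $(\ker\gamma_j)^{\circ}=T_{c_j}^{\circ}=T^{\circ}$; thus every $\gamma_j$ lies in the rank-one lattice $\{\,\chi\in X(T)\mid\chi|_{T^{\circ}}=1\,\}$, which, the root system being reduced, contains a unique positive root $\beta$. Replacing each $v_j$ ($j\in A$) by the endpoint of $\pi_j(C)$ at which the tangent weight equals $+\beta$, we may assume $\gamma_j=\beta$ for all $j\in A$; then $\beta\in\bigcap_{j\in A}R^{+}(v_j^{-1})$, so with $v=(v_1,\dots,v_r)$ the morphism $\phi_{\beta,v,A}\colon C_{\beta}\to Y$ is defined.

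To finish, I would choose the remaining $t_j$ and verify the equality. Fix the coordinate $s$ on $U_{\beta}s_{\beta}B/B\cong\mathbb A^{1}\subset C_{\beta}$; through each $\sigma_{\beta,v_j,j}$ ($j\in A$) this becomes a coordinate on $C_{\beta,v_j,j}$ on which $T$ acts through the one character $\beta$. Consequently $\phi_{\beta,v,A}(C_{\beta})$ is the diagonal ``all $s_j$, $j\in A$, equal'' inside $\prod_{j\in A}C_{\beta,v_j,j}\times\prod_{j\notin A}\{v_jP_j/P_j\}$, and $(t_1,\dots,t_r)\cdot\phi_{\beta,v,A}(C_{\beta})$ is the twisted diagonal ``$s_j=\beta(t_j)\lambda$''. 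On the other hand, if $\xi_j\in\mathbb C^{*}$ ($j\in A$) denotes the coordinate of $c_j$, then $O=Tc$ is the twisted diagonal ``$s_j=\xi_j\lambda$'', because $\beta\colon T\to\mathbb C^{*}$ is surjective. Choosing $t_j$ ($j\in A$) with $\beta(t_j)=\xi_j$ then makes $O$ the dense orbit of $(t_1,\dots,t_r)\cdot\phi_{\beta,v,A}(C_{\beta})$; since both are irreducible closed curves in $Y$, passing to closures yields $C=(t_1,\dots,t_r)\cdot\phi_{\beta,v,A}(C_{\beta})$.

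The main obstacle is the middle step: proving that the one-dimensional $T$-orbits appearing in the various factors are governed by one and the same root $\beta$ — this is exactly what lets $C$ be the image of the single curve $C_{\beta}$, rather than of a product of projective lines — together with the bookkeeping needed to pick, in each factor $j\in A$, the endpoint of $\pi_j(C)$ for which the tangent weight is $+\beta$, so that the condition $\beta\in R^{+}(v_j^{-1})$ holds. Once that is in place, identifying $C$ with a twisted diagonal and solving for the scalars $t_j$ is routine.
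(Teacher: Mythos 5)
Your proposal is correct and follows essentially the same route as the paper: define $A$ as the set of factors where the projection of $C$ is a curve, identify each such projection as a $T$-invariant curve $C_{\beta_j,v_j,j}$ in $G/P_{S\setminus\{\alpha_{i_j}\}}$, show all the roots $\beta_j$ coincide, and then recover $C$ as a $(t_1,\dots,t_r)$-translate of $\phi_{\beta,v,A}(C_\beta)$ by comparing dense $T$-orbits. The only substantive difference is that you give an actual argument (via the common identity component of the isotropy groups $T_{c_j}\supseteq T_c$ and the reducedness of the root system) for the step that all $\beta_j$ agree, which the paper asserts with just ``since $C$ is a curve''; this is a welcome addition rather than a departure.
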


\begin{proof}
Let $z_k\,=\,P_{S\setminus \{\alpha_{i_{k}}\}}/P_{S\setminus \{\alpha_{i_{k}}\}}$ for every $1\,\leq\, k \,\leq\, r$. Let 
$$A\,:=\,\{j\,\in\, \{1,\, 2,\, \cdots ,\, r\}\,\,\mid\,\, \pi_{j}(C)\ \, \text{is a curve}\}.$$
So, for any $j\,\in\, \{1,\, 2, \,\cdots,\, r\}\setminus A$, the image $\pi_{j}(C)$ is a $T$--fixed point, and hence
it is of the form $v_jz_j$ for some $v_j\,\in\, W^{S\setminus \{\alpha_{i_j}\}}$. Further, for every $j\,\in\, A$, there is an element
$v_j\,\in\, W^{S\setminus \{\alpha_{i_j}\}}$ and a root $\beta_j\,\in\, R^{+}(v_j^{-1})$ such that $\pi_j(C)\,=\, 
\sigma_{\beta_j, v_j, j}(C_{\beta_j})$. 

Note that $\pi_j\,:\,Y(w,\, \underline{i})\,\longrightarrow\, G/P_{S\setminus \{\alpha_{i_j}\}}$ is $T$--equivariant 
for every $1\,\leq\, j \,\leq \,r$. Further, note that $\pi_{j}^{-1}(U_{\beta_j}v_jP_{S\setminus \{\alpha_{i_j}\}}/P_{S\setminus \{\alpha_{i_j}\}})\cap C$ is a $T$--invariant open subset of $C$, which is isomorphic to $\mathbb{A}^{1}$. So, the action of $T$ on $\pi_{j}^{-1}(U_{\beta_j}v_jP_{S\setminus \{\alpha_{i_j}\}}/P_{S\setminus \{\alpha_{i_j}\}})\cap C$ is given by the character $\beta_j$ for every $j\in A$.
In particular, we have $\beta_j\,=\,\beta_k$ for every pair of distinct elements
$j$ and $k$ of $A$.

Let $\beta\,:=\,\beta_j$ for some $j\,\in\, A$. Let $u_{\beta}\,:\,\mathbb{A}^{1}\,\longrightarrow\, U_{\beta}$ be an isomorphism, and take $x_0\,=\,u_{\beta}(1)s_{\beta}B/B$. Now, let $y_0
\,\in\, C$ be a point such that $\pi_{j}(y_0)$ is not fixed by $T$ for any $j \,\in\, A$. Then for any $j \,\in\, A$, there is an element
$t_j\,\in\, T$ for which $\pi_{j}(y_0)\,=\,t_j\sigma_{\beta, v_j,j}(x_0)$. Take $t_j\,=\,1$ for all $j\,\in
\,\{1,\, 2,\, \cdots, \,r\}\setminus A$. 
Let $v\,=\,(v_1,\, v_2,\, \cdots,\, v_r)$. Therefore, $(t_1,\, t_2,\, \cdots,\, t_r).\phi_{\beta, v, A}(x_0)\,\in\, C$ and it is not fixed
by $T$. The condition that $C$ is $T$--invariant implies that the $T$--orbit of $T\cdot(t_1,\, t_2,\, \cdots, \,t_r).\phi_{\beta, v, A}(x_0)$
is an open dense
subset of $C$. Since $(t_1,\, t_2, \,\cdots,\, t_r)$ commutes with every element of $T$ (this is because $T$ is the diagonal subgroup
of $T^{r}$), we conclude that
$$T\cdot(t_1,\, t_2,\, \cdots,\, t_r).\phi_{\beta, v, A}(x_0)\, =\,
(t_1,\, t_2,\, \ldots, \,t_r).T\cdot\phi_{\beta, v, A}(x_0)$$ is an open dense subset of 
$(t_1, \,t_2,\, \cdots,\, t_r). \phi_{\beta, v, A}(C_{\beta}) $. Thus, we have $$C\,=\,(t_1,\, t_2,
\,\cdots, \,t_r).\phi_{\beta,v,A}(C_{\beta}),$$ and the proof is complete.
\end{proof}

\begin{remark} \rm
The set of all $T$--invariant curves in $Z(w,\, \underline{i})$ is not necessarily finite.
\end{remark}

\begin{example}\rm $G=SL(3,\mathbb{C})$, $w=s_1s_2s_1$ and $\underline{i}=(1,2,1)$.
Let $Z:=\pi(P_{\alpha_1}\times \{1\} \times P_{\alpha_1})$. 
Then $Z$ is a $T$--invariant closed subset of 
$Z(w,\,\underline{i})$ which is isomorphic to $\mathbb{P}^{1}\times \mathbb{P}^{1}$.

Note that the action of $T$ on $\mathbb{P}^{1}\times \mathbb{P}^{1}$ is the following. 

Let $$t \,=\, \begin{bmatrix}
a & 0&0\\
0 & b&0\\
0&0&c
\end{bmatrix} \,\in\, T$$ and $p \,=\, ([x_0:x_1],\,[y_0:y_1]) \,\in\, \mathbb{P}^{1}\times \mathbb{P}^{1}$. Then 
$$t\cdot p \,=\, ([ax_0: bx_1],\,[ay_0: by_1]).$$ 

Consider the subgroup $$T'\,: =\, \left\{
\begin{bmatrix}
a & 0&0\\
0 & b&0\\
0&0&1
\end{bmatrix} \,\Big\vert\,\, a,\, b \in \mathbb{C}^{\times}\, \text{ and }\, ab\,=\,1\right\}$$ of $T$. 
For every $t \,\in \,T'$, $$C_t\,:= \,\{(p, t\cdot p) \,\,\mid\,\, p \,\in\, \mathbb{P}^1\}$$ is a $T$--invariant 
curve in $\mathbb{P}^{1}\times \mathbb{P}^{1}$. 
If $C_{t_1} = C_{t_2}$ for some $t_1, t_2 \in T'$, then
either $t_1 = t_2$ or $t_1 = \begin{bmatrix}
-1 & 0&0\\
0 & -1&0\\
0&0&1
\end{bmatrix} t_2$. 

Therefore there are infinitely many distinct 
$T$--invariant curves in $\mathbb{P}^{1}\times \mathbb{P}^{1}$. 
\end{example}

\subsection{Wonderful compactifications}\label{prelims-wonderful}

Let $G$ be a semisimple adjoint type algebraic group over $\mathbb{C}$. 
We use the notation introduced at the beginning of Section \ref{prelims-bsdh}. 

Let $\sigma$ be an automorphism of $G$ of order two. Let $H\, =\, G^\sigma\, \subset\, G$ be the subgroup
defined by the fixed point locus of $\sigma$. We now recall some properties of the symmetric variety $G/H$.

A torus $T^{\prime}$ of $G$ is said to be \textit{$\sigma$--anisotropic} if 
$\sigma(t) \,=\, t^{-1}$ for every $t\,\in\, T^{\prime}.$
The rank of $G/H$ is the dimension of a maximal dimensional $\sigma$--anisotropic torus.

If $T$ is a $\sigma$---invariant maximal torus of $G,$ then $\sigma$ induces an automorphism of 
$X(T)$ of order two. Evidently, $\sigma(R)\,=\,R.$ Further, we have
\begin{equation}\label{t12}
T\,=\,T_{1}T_{2},
\end{equation}
where $T_{1}$ and $T_2$ are tori with the
property that $\sigma(t)\,=\,t$ for every 
$t\,\in\, T_{1}$ and $\sigma(t)\,=\,t^{-1}$ for every 
$t\,\in\, T_{2}.$ Clearly, $T_{1}\bigcap T_{2}$ is a finite group, and $\text{rank}(G/H)\,\ge\, \text{rank}(G)-\text{rank}(H)$. Recall that the rank of $G$ is the dimension of a maximal torus of $G$. 

Throughout, we assume that $G/H$ is of \textit{minimal rank}, meaning 
$$\text{rank}(G/H)\,=\,\text{rank}(G)-\text{rank}(H).$$ 
See \cite{Br,Kn,Ri,RS} for more details.

Let
\begin{equation}\label{x}
X\,:=\,\overline{G/H}
\end{equation}
be the wonderful compactification of $G/H$, constructed in \cite{DP}. 
Let $T$ be a $\sigma$--invariant maximal torus of $G$ for which
$\dim T_1$ (see \eqref{t12}) is maximal. 
Recall that $W$ is the Weyl group of $G$. 
Let $R$ be the root system of $G$ relative to $T$. 
For a Borel subgroup $B$ of $G$ containing $T$, let $R^{+}(B)\subset R$ be the set of positive roots corresponding to $B$.
Choose a Borel subgroup $B$ of $G$ containing
$T$ satisfying the condition that for any root $\alpha\in R^{+}(B)$
either $\sigma(\alpha)\,=\,\alpha$ or $\sigma(\alpha)\,\in\, -R^{+}(B).$ 
Recall that a parabolic subgroup $P$ of $G$ containing $B$ is said to be $\sigma$--split parabolic subgroup if $\sigma(P)$ is opposite to $P$, i.e., $\alpha$ is a root of $P$ if and only if $-\alpha$ is a root of $\sigma(P)$. Let $P$ be a minimal $\sigma$--split parabolic subgroup of $G$ containing $B$. Let $L:=P\cap \sigma(P)$ be the 
$\sigma$--invariant Levi subgroup of $G$ containing $T$. Let $B_{L}:=B\cap L$. Note that $B_{L}$ contains $T$. Let $R^{+}(B_{L})\subset R^{+}(B)$
be the set of positive roots corresponding to the Borel subgroup $B_{L}$ of $L$.
We will use the following result which is a consequence of \cite[Lemma 2.1.1, Page 482]{BJ}.

\begin{lemma}\label{lem-wonderful}
Let $z$ be the unique $B$--fixed point of $X$ in \eqref{x}. For a positive root $\alpha \,\in\, R^{+}(B)\setminus R^{+}(B_{L})$, let $C_{z, 
\alpha}\,=\,\overline{U_{\alpha}s_{\alpha}z}.$ For $\gamma\,=\,\alpha-\sigma(\alpha)$, let $C_{z, \gamma}$ be the unique 
$T$--invariant curve containing $z$ on which $T$ acts through the character $\gamma.$
Then the irreducible $T$--invariant curves in $X$ are the $W$--translates of the curves $C_{z,\alpha}$ and $C_{z,\gamma}$. They are all 
isomorphic to $\mathbb{P}^1$.
\end{lemma}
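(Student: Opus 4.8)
The plan is to reduce the classification of $T$--invariant curves in $X$ to the $T$--fixed-point and local structure of the wonderful compactification, and then invoke the cited result \cite{BJ} to identify the curves through the base point $z$. First I would recall that $T$--fixed points of $X$ are exactly the $W$--translates $wz$ of the unique $B$--fixed point $z$; this follows from the well-known description of the $T$--fixed points of a wonderful compactification of minimal rank, together with the fact that the open $G$--orbit meets the closed orbit $G/P$ in the expected way, and that $B$ has a unique fixed point on each $G$--orbit closure. Since any irreducible $T$--invariant curve $C$ has at least two $T$--fixed points among its finitely many, and any $T$--fixed point is a $W$--translate of $z$, up to replacing $C$ by a $W$--translate I may assume $z \in C$. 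So the problem becomes: describe all irreducible $T$--invariant curves through $z$.

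Next I would appeal to the local structure of $X$ at $z$. By the construction of De Concini--Procesi \cite{DP} (and its minimal-rank refinement), there is a $B$--stable affine open neighborhood of $z$ that is $T$--equivariantly isomorphic to an affine space, on which $T$ acts linearly with weights that are explicitly the negatives of the restricted roots (with multiplicities) together with the simple restricted roots coming from the boundary divisors. Concretely, \cite[Lemma 2.1.1]{BJ} identifies the $T$--weights on the tangent space $T_zX$: they are the negatives of the roots $\alpha \in R^+(B)\setminus R^+(L)$ (giving the directions of the curves $C_{z,\alpha} = \overline{U_\alpha s_\alpha z}$, each a $U_\alpha$--orbit closure isomorphic to $\mathbb{P}^1$ since $U_\alpha \cong \mathbb{A}^1$ and the closure adds exactly one $T$--fixed point) together with the restricted roots $\gamma = \alpha - \sigma(\alpha)$ coming from the torus factor $T_2$--direction, each $\gamma$ giving a unique $T$--stable coordinate curve $C_{z,\gamma}$ on which $T$ acts by $\gamma$. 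An irreducible $T$--invariant curve $C$ through $z$ has a well-defined tangent direction at $z$ which must be a $T$--weight vector, hence lies along one of these coordinate axes; since each coordinate axis in the affine chart is itself $T$--invariant and its closure in $X$ is one of the listed $\mathbb{P}^1$'s, and since a $T$--invariant curve is determined by its tangent direction at a fixed point it passes through (two distinct $T$--invariant curves through $z$ in a smooth variety have distinct tangent lines at $z$), we get $C = C_{z,\alpha}$ or $C = C_{z,\gamma}$. Running over all $W$--translates then gives the full list.

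The main obstacle I anticipate is making precise the step ``an irreducible $T$--invariant curve through $z$ is one of the coordinate curves.'' This requires knowing that the $B$--stable affine chart around $z$ is not merely an affine space but that the $T$--action on it is \emph{multiplicity-free} in the directions that matter, or at least that a $T$--invariant curve tangent to a weight-$\mu$ direction with $\mu$ of multiplicity one must be exactly that coordinate axis; when a weight occurs with higher multiplicity one must argue that $B$--invariance (not just $T$--invariance) of the relevant curves, or the $U_\alpha$/$T_2$ orbit structure, pins down the curve. I would handle this by citing \cite[Lemma 2.1.1]{BJ} for the explicit weight decomposition and the identification of these orbit curves, and by noting that in the minimal-rank case the relevant weights (negatives of roots outside $L$, and the simple restricted roots) occur with multiplicity one in the tangent space, so the coordinate curves are unambiguous. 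Finally, the claim that all these curves are isomorphic to $\mathbb{P}^1$ follows since $C_{z,\alpha}$ is the closure of a one-dimensional $U_\alpha$--orbit (hence $\mathbb{A}^1$ plus one $T$--fixed point) exactly as in the paragraph preceding the lemma for $C_\beta$, and $C_{z,\gamma}$ is the closure of a one-dimensional $T_2$--orbit, which likewise compactifies to $\mathbb{P}^1$ inside the smooth complete variety $X$.
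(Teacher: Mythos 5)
The paper does not actually prove this lemma: it is imported wholesale as ``a consequence of \cite[Lemma 2.1.1]{BJ}'', and the statement is essentially a restatement of that lemma, so the only comparison available is with the argument in \cite{BJ}. Your overall strategy --- reduce to curves through $z$ using $X^T = W\cdot z$, then classify the irreducible $T$--invariant curves through $z$ by their tangent directions in $T_zX$ --- is the right one and is in the spirit of what \cite{BJ} does. The identification of the curves $C_{z,\alpha}$ and $C_{z,\gamma}$ with $\mathbb{P}^1$ (closure of a one--dimensional $U_\alpha$--, respectively $T$--, orbit, adding finitely many fixed points) is also fine and parallels the paper's own discussion of $C_\beta$ in Section 2.1.

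There is, however, a genuine gap in the key step. The assertion that ``two distinct $T$--invariant curves through $z$ in a smooth variety have distinct tangent lines at $z$'' is false, and the repair you propose (multiplicity one of the tangent weights) does not suffice. Counterexample: $\mathbb{G}_m$ acting on $\mathbb{A}^2$ with weights $1$ and $2$ --- all weights have multiplicity one, yet the curves $y = c x^2$ form a one--parameter family of distinct $T$--invariant curves through the origin, all tangent to the $x$--axis. What one actually needs is that the tangent weights of $T$ on $T_zX$ are pairwise \emph{non-proportional}. Granting that, the correct argument runs: an irreducible $T$--invariant curve $C\ni z$ is the closure of a one--dimensional orbit, so a codimension--one subtorus $S=\ker(\chi)^{\circ}$ acts trivially on $C$; hence $C$ lies in the fixed locus $X^{S}$, which is smooth with tangent space at $z$ equal to the sum of the weight spaces for weights proportional to $\chi$; non-proportionality and multiplicity one force the component of $X^{S}$ through $z$ to be one--dimensional, hence equal to $C$, and there is exactly one such curve per weight. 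Verifying pairwise non-proportionality of the weights $-\alpha$ ($\alpha\in R^{+}(B)\setminus R^{+}(L)$) and the restricted roots $\gamma=\alpha-\sigma(\alpha)$ is precisely where the minimal-rank hypothesis enters, and it is the substantive content of \cite[Lemma 2.1.1]{BJ}; your sketch does not supply it. If you want a self-contained proof rather than the citation the paper uses, this is the step you must add.
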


\section{Equivariant vector bundles}\label{main-results}

\subsection{Equivariant bundles on BSDH varieties}\label{bsdh}

In this subsection we investigate positivity of torus--equivariant vector bundles on BSDH varieties. 
We follow the notation of Section \ref{prelims-bsdh}.

\begin{theorem}\label{thm-bsdh}
Let $G$ be a semisimple and simply connected algebraic group over the complex numbers. Let $T$ be a maximal torus of $G$,
and let $B$ be a Borel subgroup of $G$ containing $T$ as in Section \ref{prelims-bsdh}. Let $w\,=\,s_{{i_1}}s_{{i_2}}\cdots s_{{i_r}}$ be
a reduced expression and $\underline i\,:=\,(i_1,\,\cdots,\,i_r)$. Let
$Z(w,\,{\underline i})$ be the corresponding Bott-Samelson-Demazure-Hansen variety. 

Let $E$ be a $T$--equivariant vector bundle on $Z(w,\,{\underline i})$. Then $E$ is nef
(respectively, ample) if and only if the restriction $E\big\vert_C$ of $E$
to every $T$--invariant curve $C$ on $Z(w,\,{\underline i})$ is nef (respectively, ample). 
\end{theorem}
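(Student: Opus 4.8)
The plan is to reduce the ``curves $\Rightarrow$ global'' positivity statement to a question about restrictions to a controlled family of rational curves, exactly as in the toric and flag-variety cases \cite{HMP,BHN}. The forward implication is trivial: if $E$ is nef (respectively, ample), then so is $E\big\vert_C$ for every curve $C$, $T$-invariant or not. So the content is the converse. First I would recall the standard criterion that a vector bundle $E$ on a projective variety is nef if and only if $\mathcal{O}_{\mathbb{P}(E)}(1)$ has nonnegative degree on every irreducible curve in $\mathbb{P}(E)$, and is ample if and only if that degree is positive and bounded below after normalizing (equivalently, $E$ is ample iff $E\big\vert_C$ is ample for every curve $C$ together with a uniform Seshadri-type bound — but on a projective variety ampleness of $E$ is already detected by the numerical class of $\mathcal{O}_{\mathbb{P}(E)}(1)$ via the Nakai--Moishezon criterion applied on $\mathbb{P}(E)$). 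The real reduction, however, is representation-theoretic: a $T$-equivariant bundle on a projective $T$-variety is nef (respectively ample) if and only if its restriction to every $T$-invariant curve is nef (respectively ample), provided the $T$-invariant curves ``span'' the relevant Mori cone. This last proviso is what the preliminary lemmas are for.

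The key steps, in order, are as follows. (1) Pull back along the closed $B$-equivariant embedding $Z(w,\underline i)\hookrightarrow Y$ of \eqref{be}, where $Y$ is the product of partial flag varieties in \eqref{y}. Since a $T$-invariant curve in $Z(w,\underline i)$ is in particular a $T$-invariant curve in $Y$, and conversely the relevant curves in $Y$ that happen to lie in $Z(w,\underline i)$ can be used, it suffices to prove the statement with $Y$ in place of $Z(w,\underline i)$ — more precisely, to show that a $T$-equivariant bundle on $Z(w,\underline i)$ extends, or that its positivity is governed by curves of the form described in Lemma \ref{lem-bsdh}. (2) Invoke Lemma \ref{lem-bsdh}: every $T$-invariant curve in $Y$ is a $T$-translate of some $\phi_{\beta,v,A}(C_\beta)$, with $C_\beta\cong\mathbb{P}^1$. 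Thus the $T$-invariant curves form an explicit, finite-up-to-translation list, and because $E$ is $T$-equivariant, $\deg\mathcal{O}_{\mathbb{P}(E)}(1)$ on a $T$-translate of a curve equals its value on the curve itself. (3) Show that these curves generate the cone of curves of $Y$ (equivalently of $Z(w,\underline i)$), or rather the cone relevant for detecting nefness/ampleness of $\mathcal{O}_{\mathbb{P}(E)}(1)$: since $Y$ is a product of flag varieties, its Mori cone is generated by the classes of lines $C_{\beta,v,j}$ in the factors, and these are precisely the curves appearing in Lemma \ref{lem-bsdh} (with $A$ a singleton). (4) On $\mathbb{P}(E)$ itself, use that $\mathbb{P}(E)$ carries a $T$-action lifting the one on the base, with $\mathcal{O}_{\mathbb{P}(E)}(1)$ a $T$-linearized line bundle; every irreducible curve in $\mathbb{P}(E)$ degenerates under a one-parameter subgroup of $T$ to a $T$-invariant curve, which then fibers over a $T$-invariant curve $C$ downstairs and lies in $\mathbb{P}(E\big\vert_C)$. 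Hence $\deg_{\mathcal{O}_{\mathbb{P}(E)}(1)}$ of any curve is controlled by its values on curves inside $\mathbb{P}(E\big\vert_C)$ for $T$-invariant $C$, and by hypothesis $E\big\vert_C$ is nef (respectively ample), giving nonnegativity (respectively the needed positivity and boundedness) there.

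The main obstacle is step (4): passing from ``$E\big\vert_C$ nef/ample for all $T$-invariant $C$'' to ``$\mathcal{O}_{\mathbb{P}(E)}(1)$ nef/ample on all of $\mathbb{P}(E)$'' requires knowing that the numerical class of $\mathcal{O}_{\mathbb{P}(E)}(1)$ is pinned down by its intersection numbers with $T$-invariant curves in $\mathbb{P}(E)$, and — crucially — that every $T$-invariant curve in $\mathbb{P}(E)$ maps to a $T$-invariant curve in the base rather than to a point (if it maps to a $T$-fixed point $x$, it lies in the fiber $\mathbb{P}(E_x)\cong\mathbb{P}^{\mathrm{rk}E-1}$, where $\mathcal{O}_{\mathbb{P}(E)}(1)$ restricts to $\mathcal{O}(1)$, which is ample, so this case is harmless). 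For the ampleness direction one must additionally produce a \emph{uniform} lower bound for $\deg\mathcal{O}_{\mathbb{P}(E)}(1)$ over all curves, not just positivity on each; this follows because, up to $T$-translation, there are only finitely many $T$-invariant curves in the base, each contributing a $\mathbb{P}(E\big\vert_C)$ on which $\mathcal{O}(1)$ is ample with a definite Seshadri constant, and these finitely many bounds can be taken as a common bound. I would handle this by combining the Nakai--Moishezon criterion on $\mathbb{P}(E)$ with the standard fact that a $T$-linearized line bundle on a projective $T$-variety is nef (respectively ample) iff it is so on every $T$-invariant curve; then Lemma \ref{lem-bsdh} supplies the explicit list of base curves, and the fiberwise ampleness of $\mathcal{O}(1)$ handles curves lying over $T$-fixed points. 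The Bott-Samelson case then follows from the product-of-flag-varieties case via the embedding \eqref{be}, since restriction of a $T$-equivariant bundle to $Z(w,\underline i)$ keeps its positivity governed by the $T$-invariant curves already classified in Lemma \ref{lem-bsdh}.
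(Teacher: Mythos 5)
Your treatment of the nef direction is essentially the paper's argument: degenerate an arbitrary curve $D\subset\mathbb{P}(E)$ under successive one-parameter subgroups forming a basis of the cocharacter lattice of $T$ to a $T$-invariant $1$-cycle, observe that the flat limit is linearly equivalent to $D$ so degrees against $\mathcal{O}_{\mathbb{P}(E)}(1)$ are preserved, dispose of components lying in fibres over points by relative ampleness, and handle the remaining components using the hypothesis on $T$-invariant base curves. That part is sound. (Two small wobbles there: a $T$-equivariant bundle on $Z(w,\underline i)$ need not extend to $Y$ in \eqref{y}, and the Mori cone of $Y$ is not what is needed; but you correctly fall back on the classification of $T$-invariant curves in Lemma \ref{lem-bsdh}, which is all the paper actually uses from the embedding \eqref{be}.)

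The genuine gap is in the ample direction. You propose to conclude ampleness from ``a uniform lower bound for $\deg\mathcal{O}_{\mathbb{P}(E)}(1)$ over all curves.'' Such a bound is automatic once the degree is positive on every curve (degrees are integers, hence $\geq 1$), and positivity on every curve does not imply ampleness of a line bundle --- this is exactly Mumford's example quoted in the paper's introduction (\cite[Example 10.6]{Ha2}). The honest criterion (Seshadri's) bounds the ratio of degree to multiplicity at a point, and the multiplicity of an arbitrary curve $D\subset\mathbb{P}(E)$ is controlled neither by the flat degeneration (which preserves the numerical class but not multiplicities) nor by the Seshadri constants of the finitely many subvarieties $\mathbb{P}(E\big\vert_C)$, since $D$ need not lie in any of them; likewise Nakai--Moishezon requires positivity on all subvarieties of $\mathbb{P}(E)$, not just curves. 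To close the gap you must either prove that $\overline{NE}(\mathbb{P}(E))$ is a rational polyhedral cone spanned by the finitely many numerical classes of $T$-invariant curves of $\mathbb{P}(E)$ (so that Kleiman's criterion applies), or argue as the paper does: fix an ample line bundle $L$ on $Z(w,\underline i)$, use the Picard isomorphism \eqref{er} together with Lemma \ref{lem-bsdh} to see that the set of degrees $a_C$ of $L$ on $T$-invariant curves is finite, choose $n$ so large that $\mathrm{Sym}^n(E)\otimes L^{-1}$ restricts to a nef bundle on every $T$-invariant curve, apply the already-proved nef statement to this bundle, and conclude that $\mathrm{Sym}^n(E)=(\mathrm{Sym}^n(E)\otimes L^{-1})\otimes L$ is ample, hence so is $E$. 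This last route also requires the fact, absent from your proposal, that every line bundle on $Z(w,\underline i)$ is $T$-equivariant, so that the twisted bundle is again $T$-equivariant and the nef part applies to it.
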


\begin{proof}
Let $Y(T)$ be the group of all $1$--parameter subgroups of $T$. Note that
$Y(T)$ is a finitely generated free abelian group whose rank coincides with the
dimension of $T$. Let
\begin{equation}\label{b}
\{\lambda_1,\,\cdots,\,\lambda_n\}
\end{equation}
be a basis of the $\mathbb Z$--module $Y(T)$. 

If $E$ is nef (respectively, ample), then clearly $E\big\vert_C$ is nef (respectively, ample) for every
curve $C$ on $Z(w,\,{\underline i})$.

To prove the converse, first assume that $E$ is a $T$--equivariant vector
bundle on $Z(w,\,{\underline i})$ such that its restriction $E\big\vert_C$ to every
$T$--invariant curve $C\,\subset\, Z(w,\,{\underline i})$ is nef.

Let $$p\,:\, \mathbb{P}(E) \,\longrightarrow\, Z(w,\,{\underline i})$$ be the projective bundle
over $Z(w,\,{\underline i})$ 
associated to $E$. 
Let $\mathcal{O}_{\mathbb{P}(E)}(1)$ denote the tautological
relative ample line bundle over $\mathbb{P}(E)$. By
definition, $E$ is nef if the line bundle $\mathcal{O}_{\mathbb{P}(E)}(1)$ 
on $\mathbb{P}(E)$ is nef. 
So to prove that $E$ is nef, it suffices to
show that $\mathcal{O}_{\mathbb{P}(E)}(1)\big\vert_D$ is nef for every curve
$D\,\subset\, \mathbb{P}(E)$. If $p(D)$ is a point, then 
$\mathcal{O}_{\mathbb{P}(E)}(1)\big\vert_D$ is ample, because 
$\mathcal{O}_{\mathbb{P}(E)}(1)$ is relatively ample. 

Suppose now that $p(D)$ is 
a curve in $Z(w,\,{\underline i})$. Let $\widetilde{D_1}$ be
the flat limit of the curves $\lambda_1(t)D$ (see \eqref{b}) as $t \,\to\, 0$. In other
words, $\widetilde{D_1}$ is a $1$--cycle which corresponds to the limit of the points
$\lambda_1(t)[D]$, as $t \,\to\, 0$, in the Hilbert scheme of curves in $\mathbb{P}(E)$. 
Note that since $E$ is $T$--equivariant, the $1$--parameter subgroup
$\lambda_1$ acts on the Hilbert scheme of curves in
$\mathbb{P}(E)$. It follows that the $1$--cycle $\widetilde{D_1}$ 
and $D_1: \,= \, p(\widetilde{D_1})$ are in fact $\lambda_1$--invariant. Now let
$\widetilde{D_2}$ be the flat limit of $\lambda_2(t)\widetilde{D_1}$
as $t \,\to\, 0$. Since $\lambda_1$ and $\lambda_2$ commute, we see that 
$\widetilde{D_2}$ and $p(\widetilde{D_2})$ are invariant under both
$\lambda_1$ and $\lambda_2$. Continuing this way, we obtain a $1$--cycle
$\widetilde{D_n}$ on $\mathbb{P}(E)$ such that both 
$\widetilde{D_n}$ and $p(\widetilde{D_n})$ are invariant under
$\lambda_1,\,\cdots, \,\lambda_n$. 
Since $\{\lambda_1,\,\cdots, \,\lambda_n\}$ is a basis of $Y(T)$, 
both $\widetilde{D_n}$ and $p(\widetilde{D_n})$ are in fact $T$--invariant. 

Now, by the assumption on $E$, we have
\begin{equation}\label{in}
{\rm degree}(\mathcal{O}_{\mathbb{P}(E)}(1)\big\vert_{\widetilde{D_n}}) \,\ge\, 0\, .
\end{equation}
Since the curve $D$ is linearly equivalent to $\widetilde{D_n}$, from \eqref{in} it follows immediately
that ${\rm degree}(\mathcal{O}_{\mathbb{P}(E)}(1)\big\vert_D)\,\ge\, 0$. This proves that $E$ is nef if $E\big\vert_C$ is 
nef for every $T$--invariant curve $C \,\subset\, Z(w,\,{\underline i})$.

Next suppose that $E\big\vert_C$ is ample for every $T$--invariant curve $C \,\subset\, Z(w,\,{\underline i})$. We
claim that $E$ is ample. 

Recall from \eqref{be} that we have an embedding
$$Z(w,\, \underline{i})\,\hookrightarrow\, Y(w,\,\underline{i}) \,=\, G/P_{S\setminus\{\alpha_{i_1}\}}\times 
G/P_{S\setminus\{\alpha_{i_2}\}}\times \cdots\times G/P_{S\setminus\{\alpha_{i_r}\}}.$$
It can be shown inductively that the operation of restriction 
of line bundles gives an isomorphism of the Picard groups
\begin{equation}\label{er}
\text{res}\,\,:\,\, \text{Pic}(Y(w,\,\underline{i})) \,\longrightarrow\, \text{Pic}(Z(w,\, \underline{i}));
\end{equation}
see \cite[Section 3.1, Page 464]{LT} for more details. 

Further, every line bundle on $Y(w,\,\underline{i})$ is $T^r$--equivariant. 
In particular, every line bundle on $Z(w,\,{\underline i})$ is
$T$--equivariant, for the action defined using the diagonal inclusion of $T$ in $T^r$. Hence if $F$ is
a $T$--equivariant vector bundle on $Z(w,\,{\underline i})$, then so is
$F \otimes L$ for any line bundle $L$ on $Z(w,\,{\underline i})$. 

To prove that $E$ is ample, fix an ample line bundle $L$ on $Z(w,\,\underline{i})$. 
Let $\text{Sym}^n(E)$ denote the $n$--th symmetric power of $E$. 

In view of the isomorphism res in \eqref{er}, we may assume that $L$ is actually a line bundle on $Y(w,\,\underline{i})$. 
For any $T$--invariant curve $C$ in $Y(w,\,\underline{i})$, let $a_C$ denote the
positive integer such that $L\big\vert_C \,=\, \mathcal{O}_C(a_C)$; recall that $C\,\cong\, {\mathbb P}^1$ and $L\big\vert_C$
is ample. Then for $t \,\in\, T^r$, we have $a_C \,=\, a_{t\cdot C}$ for every $T$--invariant curve $C$ in $Y(w,\,\underline{i})$.

By Lemma \ref{lem-bsdh}, up to $T^r$--translates, 
there are only finitely many $T$--invariant curves in $Y(w,\,\underline{i})$. Therefore the collection 
of integers $a_C$ as $C$ varies over $T$--invariant curves in $Y(w,\,\underline{i})$ is finite. 
In particular, this collection of integers $a_C$, as $C$ varies over $T$--invariant curves in $Z(w,\,\underline{i})$, is also finite.

Take a $T$--invariant curve $C\, \subset\, Z(w,\,\underline{i})$.
Since $E\big\vert_C$ is ample, the vector bundle $\text{Sym}^n(E)\big\vert_C$ is also ample for every
integer $n\, \geq\, 1$. Moreover, $\text{Sym}^n(E)\big\vert_C$ is a direct sum of line bundles and the degrees of these line bundles 
increase with $n$. Since the collection of integers $a_C$ such that $L\big\vert_C \,=\, \mathcal{O}_C(a_C)$ is finite as $C$ varies over 
the $T$--invariant curves in $Z(w,\,\underline{i})$, we can choose a sufficiently large integer $n$ such that 
$\text{Sym}^n(E)\otimes L^{-1}\big\vert_C$ is nef for every $T$--invariant curve $C$. By
the first part of the theorem, this vector bundle $\text{Sym}^n(E)\otimes L^{-1}$ is nef. Since
$L$ is ample, this implies that $\text{Sym}^n(E)\,=\, (\text{Sym}^n(E)\otimes L^{-1})\otimes L$
is ample, and consequently $E$ itself is ample (see \cite[Proposition 6.2.11]{La} and 
\cite[Proposition 2.4, Page 67]{Ha}). 
\end{proof}

\begin{remark}\rm
A similar result was proved for equivariant vector bundles on toric varieties in \cite[Theorem 2.1]{HMP} and on generalized flag varieties in 
\cite[Theorem 3.1]{BHN}. Our proof of Theorem \ref{thm-bsdh} above is motivated by these results.
\end{remark}

\begin{proposition}\label{blowup-bsdh}
Let $G$, $T$, $B$, $Z(w,\,\underline{i})$ and $Y(w,\,\underline{i})$ be as in Theorem \ref{thm-bsdh}.
Let $x \,\in\, Z(w,\,\underline{i})$ be a $T$--fixed point, and let $\pi\,:\, \widetilde{X}\,\longrightarrow\,
Z(w,\,\underline{i})$ denote the blow-up of $Z(w,\,\underline{i})$ at $x$. Then the following three statements hold:
\begin{enumerate}
\item The action of $T$ on $Z(w,\,\underline{i})$ lifts to $\widetilde{X}$. 

\item Let $F$ be a $T$--equivariant vector bundle on $\widetilde{X}$. Then $F$ is nef if and 
only if the restriction $F\big\vert_{\widetilde{C}}$ of $F$ to every $T$--invariant curve 
$\widetilde{C}\,\subset\, \widetilde{X}$ is nef.

\item Let $W_x$ denote the exceptional divisor of the above blow-up $\pi$. Let $E$ be a 
$T$--equivariant vector bundle on $Z(w,\,\underline{i})$. Then $(\pi^{\star}E)\otimes 
\mathcal{O}_{\widetilde{X}}(m\cdot W_x)$ is a $T$--equivariant vector bundle on $\widetilde{X}$ for 
every integer $m$.
\end{enumerate}
\end{proposition}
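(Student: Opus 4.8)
The plan is to treat the three assertions in the order (1), (3), (2): parts (1) and (3) are short formal statements about equivariance under blow-up, and part (2) will turn out to be a copy of the ``nef'' half of the proof of Theorem \ref{thm-bsdh}, which is available essentially verbatim. For (1), I would argue that since $x$ is a $T$--fixed point, its ideal sheaf $\mathcal I_x\,\subset\,\mathcal O_{Z(w,\underline i)}$ is a $T$--equivariant sheaf of ideals, and then invoke the functoriality of the blow-up construction with respect to $T$--invariant ideals: the $T$--action lifts canonically to $\widetilde X\,=\,\mathrm{Proj}\big(\bigoplus_{d\ge 0}\mathcal I_x^{\,d}\big)$ and the structure morphism $\pi$ becomes $T$--equivariant. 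A consequence to be recorded for use below is that the exceptional divisor $W_x\,=\,\pi^{-1}(x)$ is then a $T$--invariant (smooth) Cartier divisor on the smooth projective variety $\widetilde X$.

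For (3), I would first note that $\pi^{\star}E$ is $T$--equivariant, being the pullback of a $T$--equivariant bundle along the $T$--equivariant morphism $\pi$. Next, $\mathcal O_{\widetilde X}(-W_x)$ is the inverse image ideal sheaf $\pi^{-1}\mathcal I_x\cdot\mathcal O_{\widetilde X}$, and it inherits a $T$--linearization from that of $\mathcal I_x$; hence $\mathcal O_{\widetilde X}(W_x)$, and therefore $\mathcal O_{\widetilde X}(m\cdot W_x)$ for every $m\,\in\,\mathbb Z$, is $T$--equivariant. Since a tensor product of $T$--equivariant sheaves is $T$--equivariant, $(\pi^{\star}E)\otimes\mathcal O_{\widetilde X}(m\cdot W_x)$ is a $T$--equivariant vector bundle on $\widetilde X$, as claimed.

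For (2), the ``only if'' direction is clear. For the converse I would reuse, almost word for word, the degeneration argument from the proof of Theorem \ref{thm-bsdh}. Let $p\,:\,\mathbb P(F)\,\longrightarrow\,\widetilde X$ be the projection and fix a basis $\{\lambda_1,\dots,\lambda_n\}$ of the lattice $Y(T)$ of one--parameter subgroups of $T$. Given a curve $D\,\subset\,\mathbb P(F)$ whose image $p(D)$ is a curve in $\widetilde X$, form the flat limit of $\lambda_1(t)D$ as $t\to 0$, then the flat limit of $\lambda_2(t)$ applied to that $1$--cycle, and so on; here one uses that $F$ is $T$--equivariant, so $T$ acts on $\mathbb P(F)$ and on its Hilbert scheme of curves. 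This produces a $1$--cycle $\widetilde{D_n}$ that is $T$--invariant and, as in the proof of Theorem \ref{thm-bsdh}, linearly equivalent to $D$. Since $T$ is connected, each irreducible component $C_i$ of $\widetilde{D_n}$ is itself $T$--invariant, and $p(C_i)$ is either a point --- where $\mathcal O_{\mathbb P(F)}(1)\big\vert_{C_i}$ is ample by relative ampleness --- or a $T$--invariant curve $\widetilde C\,\subset\,\widetilde X$, where $\mathcal O_{\mathbb P(F)}(1)\big\vert_{C_i}$ is a restriction of $\mathcal O_{\mathbb P(F|_{\widetilde C})}(1)$ and hence has nonnegative degree by the hypothesis on $F$. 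Summing over components, $\deg\big(\mathcal O_{\mathbb P(F)}(1)\big\vert_{\widetilde{D_n}}\big)\,\ge\,0$, and therefore the same holds for $D$; together with the curves $D$ contracted by $p$, which are handled directly, this shows $\mathcal O_{\mathbb P(F)}(1)$ is nef, i.e.\ $F$ is nef.

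I do not expect a serious obstacle. The one point deserving a little care is that, in contrast with the ``ample'' half of Theorem \ref{thm-bsdh}, part (2) does \emph{not} require the finiteness up to translation of the $T$--invariant curves (Lemma \ref{lem-bsdh}): the degeneration argument for nefness uses nothing about $\widetilde X$ beyond its being a projective variety with a $T$--action, so it applies to the blow-up directly. A secondary technical point I would want to state cleanly is the passage in (3) from ``$W_x$ is $T$--invariant'' to ``$\mathcal O_{\widetilde X}(W_x)$ is $T$--equivariant'', which is precisely the assertion that the inverse image ideal of a $T$--invariant ideal carries a natural $T$--linearization, already used in (1).
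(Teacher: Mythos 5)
Your proposal is correct, and for parts (2) and (3) it follows the paper's proof essentially verbatim: the paper likewise disposes of (2) by observing that the nef half of the proof of Theorem \ref{thm-bsdh} uses nothing beyond projectivity and the $T$--action (your added remark that the finiteness of $T$--invariant curves from Lemma \ref{lem-bsdh} is not needed here matches the paper's own comment), and it derives (3) from the $T$--invariance of $W_x$ together with equivariance of pullback and tensor product. The only genuine divergence is in part (1): the paper does not invoke the functoriality of $\mathrm{Proj}\bigl(\bigoplus_{d\ge 0}\mathcal I_x^{\,d}\bigr)$ with respect to the $T$--linearized ideal $\mathcal I_x$; instead it identifies $T_x(Y(w,\underline i))$ with the $T$--module $\bigoplus_{j=1}^r \mathfrak g/\mathfrak p_{S\setminus\{\alpha_{i_j}\}}$, deduces that $T$ acts linearly on the subspace $T_x(Z(w,\underline i))$ and hence on the exceptional divisor $\mathbb P(T_x(Z(w,\underline i)))$, and extends the action from the complement of the exceptional divisor. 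Your route is more general and arguably cleaner --- it works for any $T$--invariant center on any variety with a $T$--action and immediately yields the equivariance of $\mathcal O_{\widetilde X}(-W_x)$ needed in (3) --- while the paper's route produces the explicit $T$--module structure on the exceptional divisor, in the same spirit as the tangent-space computation it repeats for wonderful compactifications in Proposition \ref{blowup-wonderful}. Either argument is acceptable; there is no gap.
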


\begin{proof}
Let $x \,\in \,Z(w,\,\underline{i}) $ be a $T$--fixed point. Since $\pi$ is a blow up centered at $x$, 
the action of $T$ on $Z(w,\,\underline{i})$ lifts to $\widetilde{X}$. This proves (1). 

Since the action of $T$ lifts to $\widetilde{X}$, the proof of (2) goes through
exactly as the proof of the analogous statement in Theorem
\ref{thm-bsdh} does. Note that in the proof of the nef part of Theorem \ref{thm-bsdh} we only used
the $T$--action on $Z(w,\,\underline{i})$. 

Now we will prove (3). 
Since $T$ acts on the exceptional divisor $W_x$ of $\pi$, 
we conclude that $\mathcal{O}_{\widetilde{X}}(m\cdot W_x)$ is a $T$--equivariant line
bundle for every integer $m$. Hence if $E$ is a $T$--equivariant
vector bundle on $Z(w,\,\underline{i})$, then $(\pi^{\star}E) \otimes \mathcal{O}_{\widetilde{X}}(m\cdot W_x)$ is a 
$T$--equivariant vector bundle on $\widetilde{X}$ for every integer $m$.
\end{proof}

We now recall the definition of Seshadri constant for vector bundles. 
Let $E$ be a vector bundle on a projective variety $X$. Take any point $x \,\in\,
X$. The Seshadri constant of $E$ at $x$ was first defined explicitly in
\cite{Hac}. 

Let $\pi\,:\, \widetilde{X} \,\longrightarrow\, X$ denote the blow-up of $X$ at $x$.
Consider the following diagram:
\begin{equation}\label{fig1}
\xymatrix{
\mathbb{P}(\pi^{\star}E) \ar[r]^{\widetilde{\pi}}\ar[d]_{q} & \mathbb{P}(E)\ar[d]^{p}\\
\widetilde{X} \ar[r]^{\pi} & X
 }
\end{equation}
so the above projections $p$ and $q$ define projective bundles. Let
\begin{equation}\label{xi}
\xi \,=\, \mathcal{O}_{\mathbb{P}(\pi^{\star}E)}(1)
\end{equation}
be the tautological bundle on $\mathbb{P}(\pi^{\star}E)$. Let 
$Y_x \,=\, p^{-1}(x)$ and $Z_x \,=\,\widetilde{\pi}^{-1}(Y_x)$.

We can now give the definition of Seshadri constants of vector bundles. 

\begin{definition}\label{sc-vb}
Let $X$ be a projective variety and let $E$ be a vector bundle on $X$. For a point $x \in X$, 
the 
Seshadri constant of $E$ at $x$ is defined as
$$\varepsilon(E,x) \,\,:=\,\, \textup{sup}\{\lambda \,\ge\, 0 \ \mid \ \xi - \lambda
Z_x\ \text{~is nef}\}.$$
\end{definition}

Alternately, $\varepsilon(E,x)$ can be defined as follows:
$$\varepsilon(E,x) \,=\, \inf \frac{\xi\cdot C}
{\text{mult}_x p_{\ast}C}\, ,$$
where the infimum is taken over all curves $C \,\subset\, \mathbb{P}(E)$ that intersect $p^{-1}(x)$, but are not contained in $p^{-1}(x)$. 
For more details on Seshadri constants of vector bundles, see \cite{Hac, FM}.

Now we work with the notation in Theorem \ref{thm-bsdh}.
Let $E$ be a $T$--equivariant vector bundle on $Z(w,\,\underline{i})$. 
Let $C \,\subset\, Z(w,\,\underline{i})$ be a
$T$--invariant curve. Since $C \cong \mathbb{P}^1$, the restriction of $E$ to $C$ is of the form
$$E\big\vert_C\, =\, \mathcal{O}_C(a_1) \oplus \ldots \oplus \mathcal{O}_C(a_n)$$ for some
integers $a_1,\,\cdots,\,a_n$.

The next result uses this decomposition to describe the Seshadri constants of $E$ at $T$--fixed points.

\begin{theorem}\label{sc-bsdh}
Let $G$ be a semisimple and simply connected algebraic group. Let $T$ be a maximal torus of $G$, 
and let $B$ be a Borel subgroup of $G$ containing $T$ as in Theorem \ref{thm-bsdh}.
Let $w\,=\,s_{{i_1}}s_{{i_2}}\cdots s_{{i_r}}$ be a reduced expression, and 
$\underline i\,:=\,(i_1,\,\cdots,\,i_r)$. Let
$X\,:=\, Z(w,\,{\underline i})$ be the corresponding Bott-Samelson-Demazure-Hansen variety. 
Let $E$ be a
$T$--equivariant nef vector bundle on $X$ of rank $n$, and let $x \,\in\, X$ be a $T$--fixed
point. Then 
$$\varepsilon(E,x) \,=\, \text{min}\{a_i(C)\}_{C,i}\, ,$$ where the minimum is taken over
all $T$--invariant curves $C \,\subset\, X$ passing through $x$ and integers
$\{a_1(C),\,\cdots,\, a_n(C)\}$ such that $E\big\vert_C \,= \,\mathcal{O}_C(a_1(C)) \oplus \cdots \oplus
\mathcal{O}_C(a_n(C))$. 
\end{theorem}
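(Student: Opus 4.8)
The plan is to compute $\varepsilon(E,x)$ using the second (infimum) characterization of the Seshadri constant together with the blow-up description in Proposition \ref{blowup-bsdh}. Write $m := \min\{a_i(C)\}_{C,i}$, the minimum over all $T$--invariant curves $C$ through $x$. Since $E$ is nef, every $a_i(C)\ge 0$, so $m\ge 0$. I will show $\varepsilon(E,x)\le m$ and $\varepsilon(E,x)\ge m$ separately.

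For the inequality $\varepsilon(E,x)\le m$: pick a $T$--invariant curve $C\subset X$ through $x$ and an index $i$ realizing the minimum, $a_i(C)=m$. The splitting $E|_C=\bigoplus_j\mathcal{O}_C(a_j(C))$ gives a section of $\mathbb{P}(E)$ over $C$ corresponding to the quotient line bundle $\mathcal{O}_C(a_i(C))$, i.e.\ a curve $C'\subset p^{-1}(C)\subset\mathbb{P}(E)$ with $p|_{C'}$ an isomorphism onto $C$ and $\xi\cdot C'=\mathcal{O}_{\mathbb{P}(E)}(1)\cdot C' = a_i(C)=m$. Since $C$ is smooth at $x$ (it is $\cong\mathbb{P}^1$) and $C'$ maps isomorphically onto it, $\mathrm{mult}_x\,p_*C'=1$. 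Hence $\varepsilon(E,x)\le \frac{\xi\cdot C'}{\mathrm{mult}_x p_*C'}=m$.

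For the reverse inequality $\varepsilon(E,x)\ge m$: I will use Definition \ref{sc-vb} and show that $\xi-mZ_x$ is nef on $\mathbb{P}(\pi^\star E)$, where $\pi:\widetilde{X}\to X$ is the blow-up at $x$, $W_x$ the exceptional divisor, and $Z_x=\widetilde\pi^{-1}(Y_x)$. The key identity is that $\xi-mZ_x$ is the tautological bundle $\mathcal{O}_{\mathbb{P}(\pi^\star E\otimes\mathcal{O}_{\widetilde X}(-mW_x))}(1)$ under the natural isomorphism $\mathbb{P}(\pi^\star E)\cong\mathbb{P}(\pi^\star E\otimes\mathcal{O}_{\widetilde X}(-mW_x))$ (twisting $E$ by a line bundle does not change the projectivization, only shifts $\xi$ by the pullback of that line bundle; here $Z_x$ is the pullback of $W_x$ to $\mathbb{P}(\pi^\star E)$ since $Z_x = \widetilde\pi^{-1}(p^{-1}(x)) = q^{-1}(W_x)$). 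So it suffices to prove that $F:=\pi^\star E\otimes\mathcal{O}_{\widetilde X}(-mW_x)$ is nef. By Proposition \ref{blowup-bsdh}(3), $F$ is a $T$--equivariant vector bundle on $\widetilde X$, and by Proposition \ref{blowup-bsdh}(2) it is enough to check that $F|_{\widetilde C}$ is nef for every $T$--invariant curve $\widetilde C\subset\widetilde X$. There are two cases. If $\widetilde C$ is disjoint from $W_x$ (equivalently $\widetilde C$ does not meet the exceptional divisor), then $\mathcal{O}_{\widetilde X}(-mW_x)|_{\widetilde C}$ is trivial and $F|_{\widetilde C}\cong(\pi^\star E)|_{\widetilde C}\cong E|_{\pi(\widetilde C)}$, which is nef since $E$ is nef. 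If $\widetilde C$ meets $W_x$, then since $\widetilde C$ is $T$--invariant it is either contained in $W_x$ or is the strict transform of a $T$--invariant curve $C\subset X$ through $x$; in the first case one computes degrees directly on $W_x\cong\mathbb{P}(T_xX)$ using that $-W_x|_{W_x}$ is the relative $\mathcal{O}(1)$, and in the second case $\pi:\widetilde C\to C$ is an isomorphism, $\widetilde C\cdot W_x=\mathrm{mult}_x C=1$ (as $C$ is smooth at $x$), so $\mathcal{O}_{\widetilde X}(-mW_x)|_{\widetilde C}=\mathcal{O}_{\widetilde C}(-m)$ and $F|_{\widetilde C}=\bigoplus_i\mathcal{O}_{\widetilde C}(a_i(C)-m)$, which is nef precisely because $a_i(C)\ge m$ for all $i$ by the definition of $m$.

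The main obstacle I anticipate is the bookkeeping in the last case — the careful identification of $T$--invariant curves on $\widetilde X$ meeting the exceptional divisor, verifying $\mathrm{mult}_x C = 1$ so that the strict transform meets $W_x$ transversally in one point, and handling curves contained in $W_x$. The latter requires knowing that $\mathcal{O}_{\widetilde X}(W_x)|_{W_x}\cong\mathcal{O}_{\mathbb{P}(T_xX)}(-1)$ and that $(\pi^\star E)|_{W_x}$ is trivial (pulled back from the point $x$), so $F|_{W_x}\cong E_x\otimes\mathcal{O}_{\mathbb{P}(T_xX)}(m)$ is even ample for $m\ge 1$ (and trivial, hence nef, for $m=0$); combined with Proposition \ref{blowup-bsdh}(2) this closes the argument. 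Once nefness of $F$ is established, $\xi-mZ_x$ is nef, so $\varepsilon(E,x)\ge m$, and together with the opposite inequality we get $\varepsilon(E,x)=m$.
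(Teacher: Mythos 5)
Your proof is correct and follows essentially the same route as the paper: both reduce nefness of $\xi-\lambda q^{\star}(W_x)$ to $T$--invariant curves on the blow-up (via the equivariant degeneration / Proposition \ref{blowup-bsdh}) and then run the same three-case analysis ($\widetilde{C}\subset W_x$, $\widetilde{C}$ disjoint from $W_x$, $\widetilde{C}$ the strict transform of a $T$--invariant curve through $x$ with $W_x\cdot\widetilde{C}=1$), yielding the condition $\lambda\le\min_i a_i(C)$. Your repackaging via the twisted bundle $\pi^{\star}E\otimes\mathcal{O}_{\widetilde X}(-mW_x)$ and the explicit section curve for the upper bound are only cosmetic variations on the paper's argument, which extracts both inequalities at once from the ``nef if and only if $\lambda\le\min\{a_i(C)\}$'' equivalence.
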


\begin{proof}
Let $W_x$ denote the exceptional divisor of the blow-up $$\pi\,:\, \widetilde{X} \,\longrightarrow\,
X \,=\, Z(w,\,\underline{i})$$ at the point $x \,\in\, Z(w,\,\underline{i})$. By definition, the Seshadri constant of $E$ at
$x$ is given by the following (see \eqref{fig1}): 
$$\varepsilon(E,x) \,=\, \text{sup}\{\lambda \ge 0 ~\mid ~ \xi - \lambda q^{\star}(W_x)\
\text{~is nef}\}.$$

To prove that $\xi - \lambda q^{\star}(W_x)$ is nef for a particular $\lambda$, we need
to show that $$(\xi - \lambda q^{\star}(W_x)) \cdot D \,\ge\, 0,$$ for every
curve $D \,\subset\, \mathbb{P}(\pi^{\star}E)$. As
argued in the proof of Theorem \ref{thm-bsdh}, there exists a $T$--invariant
curve $\widetilde{D} \,\subset\, \mathbb{P}(\pi^{\star}E)$ which is linearly
equivalent to $D$ and moreover
$$\widetilde{C} \,:=\, q(\widetilde{D}) \,\subset\, \widetilde{X}$$ is a $T$--invariant curve. 
Note that $\widetilde{X}$ inherits an action of $T$, by Proposition \ref{blowup-bsdh}.

Consider the following diagram: 
\begin{equation}\label{fig2}
\xymatrix{
 \mathbb{P}(\pi^{\star}E\big\vert_{\widetilde{C}}) \ar@{^{(}->}[r]\ar[d]_{q_1} & \mathbb{P}(\pi^{\star}E)
\ar[r]^{\widetilde{\pi}}\ar[d]_{q} & \mathbb{P}(E)\ar[d]^{p}\\
 \widetilde{C} \ar@{^{(}->}[r] & \widetilde{X} \ar[r]^{\pi} &
 X
 }
\end{equation}
Then 
$$(\xi - \lambda q^{\star}(W_x)) \cdot D \,=\, (\xi - \lambda q^{\star}(W_x)) \cdot \widetilde{D}\,=\, 
\left[\mathcal{O}_{\mathbb{P}(\pi^{\star}E\big\vert_{\widetilde{C}})}(1)-\lambda
q_1^{\star}(W_x\big\vert_{\widetilde{C}})\right] \cdot \widetilde{D}.$$
So $\xi - \lambda q^{\star}(W_x)$ is nef 
if and only if $\mathcal{O}_{\mathbb{P}(\pi^{\star}E\big\vert_{\widetilde{C}})}(1)-\lambda
q_1^{\star}(W_x\big\vert_{\widetilde{C}})$ 
is nef for every $T$--invariant curve $\widetilde{C} \,\subset\,
\widetilde{X}$.

Now let $\widetilde{C} \,\subset\, \widetilde{X}$ be any $T$--invariant
curve. We know that $\widetilde{C}$ is isomorphic to the projective
line $\mathbb{P}^1$. We will now investigate the values $\lambda \,\ge\, 0$ for which
the line bundle $\mathcal{O}_{\mathbb{P}(\pi^{\star}E\big\vert_{\widetilde{C}})}(1)-\lambda
q^{\star}(W_x\big\vert_{\widetilde{C}})$ is nef. 

First suppose that $\widetilde{C}$ is contained in the exceptional
divisor $W_x$ of the blow-up $$\pi\,:\, \widetilde{X} \,\longrightarrow\, X.$$ 
Note that $W_x$ is isomorphic to a projective space, and 
$$\mathcal{O}_{\widetilde X}(W_x)\big\vert_{W_x} \,=\, \mathcal{O}_{W_x}(-1)\, .$$ Hence we have
$-\lambda(W_x\big\vert_{\widetilde{C}}) \,=\,\mathcal{O}_{\widetilde{C}}(\lambda)$. 
Since $E$ on $X$ is nef by hypothesis, it follows that 
$\mathcal{O}_{\mathbb{P}(\pi^{\star}E\big\vert_{\widetilde{C}})}(1)-\lambda
q_1^{\star}(W_x\big\vert_{\widetilde{C}})$ is nef for every $\lambda \,\geq\, 0$. 

Now suppose that $\widetilde{C}$ is not contained in $W_x$, and set $C
\,=\, \pi(\widetilde{C})$. Then $C \,\subset\, X$ is a $T$--invariant curve. If
$x\,\notin\, C$, then $W_x\big\vert_{\widetilde{C}} \,=\,
\mathcal{O}_{\widetilde{C}}$, and 
$\mathcal{O}_{\mathbb{P}(\pi^{\star}E\big\vert_{\widetilde{C}})}(1)$ is nef
because $\pi^{\star}E\big\vert_{\widetilde{C}}$ is so.

Finally, assume that $x \,\in\, C$. Then $$W_x \cdot \widetilde{C} \,=\, 1,$$ since $C$ is a
smooth curve. Let $a_1(C),\,\cdots,\,a_n(C)$ be non-negative integers such that
$E\big\vert_C \,=\, \mathcal{O}_C(a_1(C)) \oplus \ldots \oplus \mathcal{O}_C(a_n(C))$. Then 
$\mathcal{O}_{\mathbb{P}(\pi^{\star}E\big\vert_{\widetilde{C}})}(1)-\lambda
q_1^{\star}(W_x\big\vert_{\widetilde{C}})$ is nef if and only if 
$\mathcal{O}_{\widetilde{C}}(a_1(C)-\lambda) \oplus \cdots \oplus \mathcal{O}_{\widetilde{C}}(a_n(C)-\lambda)$ is nef. 
Now. $$\mathcal{O}_{\widetilde{C}}(a_1(C)-\lambda) \oplus \cdots \oplus
\mathcal{O}_{\widetilde{C}}(a_n(C)-\lambda)$$ is nef
if and only if $\lambda \,\le\,
\text{min}\{a_1(C),\,\cdots,\,a_n(C)\}$. Running over all $T$--invariant curves in
$X$, we obtain the theorem. 
\end{proof}

\begin{remark}\rm
Similar computations of Seshadri constants were carried out in \cite[Proposition 3.2]{HMP}
for torus-equivariant vector bundles on toric varieties, and in \cite[Theorem 3.2]{BHN}
for torus-equivariant vector bundles on generalized flag varieties.
\end{remark}

\begin{remark}\label{rem-bsdh}\rm
It is known that a nef vector bundle $E$ on a projective variety $X$ is ample if and only if $\inf_{x\in X} \varepsilon(E,x)
\, >\, 0$; for example, see
\cite[Theorem 3.11]{FM}. On the other hand, in general, Seshadri constants of ample line bundles can be arbitrarily small positive numbers 
(see \cite[Example 5.2.1]{La1}). 

But one expects better lower bounds at very general points. If $L$ is an ample line bundle on a surface $X$, then 
\cite{EL} proved the following: 
$$\varepsilon(L,x) \ge 1, \,\text{~for a very general point~} x \in X.$$ 

The same inequality is conjectured for ample line bundles on higher dimensional varieties; see \cite[Conjecture 5.2.4]{La1}.

It is interesting to ask if this lower bound can be generalized to vector bundles of arbitrary rank. This question is open in general. 

Theorem \ref{sc-bsdh} shows that for an ample $T$--equivariant
vector bundle $E$ on $Z(w,\,\underline{i})$, we have 
$$\varepsilon(E,x) \,\ge\, 1$$ for every $T$--fixed point $x \,\in\, Z(w,\,\underline{i})$. 
If $E$ is a vector bundle on a projective variety $X$, then the maximum value of $\varepsilon(E,x)$, 
as the point $x$ runs over $X$, is achieved at a very general point in $X$
(see \cite[Proposition 3.35]{FM}). Since $T$--fixed points are special, we conclude that
$$\varepsilon(E,x) \,\ge \,1$$
for a very general point $x \,\in\, Z(w,\,\underline{i})$ if $E$ is an ample $T$--equivariant vector bundle on $Z(w,\,\underline{i})$.
This gives an affirmative answer to the above question in the case of 
ample $T$--equivariant vector bundles on BSDH varieties. 
\end{remark}

\subsection{Equivariant bundles on wonderful compactifications}\label{wonderful}

In this subsection we study positivity of equivariant vector bundles on wonderful compactifications. The notation of Section 
\ref{prelims-wonderful} is followed. The proofs are similar to the corresponding proofs in Section \ref{bsdh}.

\begin{theorem}\label{thm-wonderful}
Let $G$ be a semisimple adjoint type algebraic group. 
Let $\sigma$ be an automorphism of $G$ of order two, and let $H$ be the subgroup of all fixed points of $\sigma$ in
$G.$ Assume that $G/H$ has minimal rank. Let $X\,:=\,\overline{G/H}$ be the wonderful compactification of $G/H$.
Let $T$ be a $\sigma$--invariant maximal torus of $G$ such that
$\dim T_1$ (see \eqref{t12}) is maximal.

Let $E$ be a $T$--equivariant vector bundle on $X$. Then $E$ is nef
(respectively, ample) if and only if the restriction $E\big\vert_C$ of $E$
to every $T$--invariant curve $C$ on $X$ is nef (respectively, ample). 
\end{theorem}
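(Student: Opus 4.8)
The plan is to mimic the proof of Theorem \ref{thm-bsdh} essentially verbatim, substituting Lemma \ref{lem-wonderful} for Lemma \ref{lem-bsdh} at the single place where the combinatorial classification of $T$--invariant curves is used. The ``only if'' directions in both the nef and ample statements are trivial, since the restriction of a nef (respectively, ample) vector bundle to any curve is again nef (respectively, ample). So the whole content is in the ``if'' directions.

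For the nef part: let $E$ be a $T$--equivariant vector bundle on $X$ whose restriction to every $T$--invariant curve is nef, and let $p\colon \mathbb{P}(E)\to X$ be the projective bundle, with $\mathcal{O}_{\mathbb{P}(E)}(1)$ the tautological line bundle. To prove $E$ is nef it suffices to show $\mathcal{O}_{\mathbb{P}(E)}(1)\cdot D\ge 0$ for every curve $D\subset\mathbb{P}(E)$; the case $p(D)$ a point is immediate from relative ampleness. When $p(D)$ is a curve, I would fix a $\mathbb{Z}$--basis $\{\lambda_1,\dots,\lambda_n\}$ of the cocharacter lattice $Y(T)$ and form iterated flat limits: let $\widetilde{D_1}$ be the limit cycle of $\lambda_1(t)\cdot D$ as $t\to 0$ in the Hilbert scheme of curves in $\mathbb{P}(E)$, then $\widetilde{D_2}$ the limit of $\lambda_2(t)\cdot\widetilde{D_1}$, and so on; since the $\lambda_i$ commute, $\widetilde{D_n}$ and its image $p(\widetilde{D_n})$ are invariant under all of $\lambda_1,\dots,\lambda_n$, hence $T$--invariant. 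Since $\widetilde{D_n}$ is algebraically (hence numerically) equivalent to $D$, and $\mathcal{O}_{\mathbb{P}(E)}(1)\cdot\widetilde{D_n}\ge 0$ by hypothesis (applied to each $T$--invariant component of $p(\widetilde{D_n})$, noting each such component is a $\mathbb{P}^1$ by Lemma \ref{lem-wonderful} and the restriction of $\mathcal{O}_{\mathbb{P}(E)}(1)$ over it is nef), we get $\mathcal{O}_{\mathbb{P}(E)}(1)\cdot D\ge 0$, so $E$ is nef. Note this argument uses only the $T$--action on $X$, exactly as in Theorem \ref{thm-bsdh}.

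For the ample part: suppose $E\big\vert_C$ is ample for every $T$--invariant curve $C\subset X$. The point is to reduce ampleness of $E$ to nefness of $\operatorname{Sym}^n(E)\otimes L^{-1}$ for a fixed ample line bundle $L$ and $n\gg 0$, using that $\operatorname{Sym}^n(E)$ ample implies $E$ ample (\cite[Proposition 6.2.11]{La}, \cite[p.~67, Proposition 2.4]{Ha}). I would use that every line bundle on $X$ is $T$--equivariant (the wonderful compactification $X$ has a $B$--fixed point $z$, and line bundles are linearized in the standard way), so $\operatorname{Sym}^n(E)\otimes L^{-1}$ is a $T$--equivariant vector bundle to which the nef part applies. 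The key finiteness input is: by Lemma \ref{lem-wonderful}, the $T$--invariant curves in $X$ are, up to $W$--translation, the finitely many curves $C_{z,\alpha}$ and $C_{z,\gamma}$, all isomorphic to $\mathbb{P}^1$; since $L\cdot C = L\cdot(w\cdot C)$ for $w\in W$ (as $W=N_G(T)/T$ normalizes $T$ and acts trivially on numerical classes up to this translation), the set of integers $a_C$ with $L\big\vert_C=\mathcal{O}_C(a_C)$ is finite as $C$ ranges over all $T$--invariant curves. On each such $C$, ampleness of $E\big\vert_C$ forces the splitting degrees of $\operatorname{Sym}^n(E)\big\vert_C$ to grow without bound in $n$, so for $n$ large enough $\operatorname{Sym}^n(E)\otimes L^{-1}\big\vert_C$ has all splitting degrees $\ge 0$, i.e. is nef, uniformly in $C$. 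Then the nef part gives $\operatorname{Sym}^n(E)\otimes L^{-1}$ nef, hence $\operatorname{Sym}^n(E)=(\operatorname{Sym}^n(E)\otimes L^{-1})\otimes L$ ample, hence $E$ ample.

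The main obstacle — really the only substantive point beyond transcribing the BSDH argument — is ensuring the two structural facts that the BSDH proof got from the embedding into a product of flag varieties are available here in the wonderful setting: namely (i) that $\operatorname{Pic}(X)$ is generated by $T$--equivariant line bundles (so twisting by $L$ and $L^{-1}$ stays within $T$--equivariant bundles), and (ii) that there are only finitely many $T$--invariant curves up to the action of a group normalizing $T$, which controls the integers $a_C$. Fact (ii) is exactly Lemma \ref{lem-wonderful}. Fact (i) follows from the standard description of $\operatorname{Pic}(X)$ for wonderful compactifications together with the existence of the $B$--fixed point $z$; I would simply invoke this, as the paper does implicitly. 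Everything else is formally identical to Section \ref{bsdh}, as the authors indicate.
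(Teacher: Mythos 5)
Your proposal is correct and follows essentially the same route as the paper: the paper's proof of Theorem \ref{thm-wonderful} simply refers back to Theorem \ref{thm-bsdh}, using the iterated flat-limit degeneration for the nef part and, for the ample part, the facts that every line bundle on $X$ is $T$--equivariant and that Lemma \ref{lem-wonderful} gives only finitely many $T$--invariant curves (so the twist $\mathrm{Sym}^n(E)\otimes L^{-1}$ becomes nef uniformly for $n\gg 0$). Your write-up just makes explicit the steps the paper leaves to the cross-reference.
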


\begin{proof}
The proof for the nef part is the same as the proof of the analogous statement in Theorem \ref{thm-bsdh}: 
Suppose that $E$ is a $T$--equivariant vector bundle on $X$ such that the restriction $E\big\vert_C$ of $E$
to every $T$--invariant curve $C$ is nef. If we are given a curve $D$, then we can 
construct a $T$--invariant curve $C$ such that $C$ and $D$ are linearly equivalent. So it follows that
$E$ is nef. 

Next suppose that $E\big\vert_C$ is ample for every $T$--invariant curve $C \,\subset\, X$. We will
show that $E$ itself is ample. This also is similar to the proof in Theorem \ref{thm-bsdh}; in fact, it is easier.

Note that every line bundle on $X$ is $T$--equivariant. 
Hence if $F$ is a $T$--equivariant vector bundle on $X$, then so is
$F \otimes L$ for any line bundle $L$ on $X$. 
To show that $E$ is ample, fix an
ample line bundle $L$ on $X$. 
Since every line bundle on $X$ is $T$--equivariant, $E \otimes L$ is also $T$--equivariant. 

By Lemma \ref{lem-wonderful}, there are only finitely many $T$--invariant curves in $X$. Now we complete
the argument exactly as done in the last paragraph of the proof of Theorem \ref{thm-bsdh}. 
\end{proof}

\begin{proposition}\label{blowup-wonderful}
Let $G$, $X$, $T$ and $B$ be as in Theorem \ref{thm-wonderful}.
Let $x \,\in\, X$ be a $T$--fixed point, and let $\pi\,:\, \widetilde{X}\,\longrightarrow\,
X$ denote the blow-up of $X$ at $x$. Then the following three statements hold:
\begin{enumerate}
\item The action of $T$ on $X$ lifts to $\widetilde{X}$. 

\item Let $F$ be a $T$--equivariant vector bundle on $\widetilde{X}$. Then $F$ is nef if and 
only if the restriction $F\big\vert_{\widetilde{C}}$ of $F$ to every $T$--invariant curve 
$\widetilde{C}\,\subset\, \widetilde{X}$ is nef.

\item Let $W_x$ denote the exceptional divisor of the blow-up $\pi$. Let $E$ be a 
$T$--equivariant vector bundle on $X$. Then $(\pi^{\star}E)\otimes 
\mathcal{O}_{\widetilde{X}}(m\cdot W_x)$ is a $T$--equivariant vector bundle on $\widetilde{X}$ for 
every integer $m$.
\end{enumerate}
\end{proposition}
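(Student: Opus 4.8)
The plan is to transcribe, almost verbatim, the proof of Proposition \ref{blowup-bsdh}, since $X=\overline{G/H}$ is a smooth projective variety carrying a $T$-action and the two non-formal inputs used there—linearity of the $T$-action on a tangent space at a fixed point, and the flat-limit argument for nefness—are available here as well.

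For part (1): since $x\in X$ is a $T$-fixed point and $X$ is smooth, $T$ acts linearly on the Zariski tangent space $T_xX$. The exceptional divisor $W_x$ of $\pi\colon \widetilde X\to X$ is canonically isomorphic to $\mathbb P(T_xX)$, so $T$ acts on $W_x$ through this linear action; equivalently, by the universal property of the blow-up each element of $T$ (fixing $x$) lifts uniquely to an automorphism of $\widetilde X$, and this family of lifts is algebraic and defines a $T$-action because it is induced by the linear $T$-action on $T_xX$. Since $\pi$ is an isomorphism over $X\setminus\{x\}$, the lifted action restricts to the given one there, which proves (1). For part (2): once the $T$-action on $\widetilde X$ is available from (1), the proof of the nef part of Theorem \ref{thm-bsdh} goes through unchanged. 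Given a curve $D\subset\mathbb P(F)$ with $q(D)$ a curve, pick a basis $\{\lambda_1,\dots,\lambda_n\}$ of the group of one-parameter subgroups of $T$ and successively form the flat limits of $\lambda_j(t)\cdot(\text{previous }1\text{-cycle})$ as $t\to0$ in the Hilbert scheme of curves in $\mathbb P(F)$; this yields a $1$-cycle $\widetilde D_n$ linearly equivalent to $D$ with both $\widetilde D_n$ and $q(\widetilde D_n)$ being $T$-invariant. The hypothesis gives $\mathrm{degree}(\mathcal O_{\mathbb P(F)}(1)\big\vert_{\widetilde D_n})\ge 0$, hence the same for $D$; if instead $q(D)$ is a point, then $\mathcal O_{\mathbb P(F)}(1)\big\vert_D$ is ample by relative ampleness. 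Thus $F$ is nef, and the reverse implication is immediate.

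For part (3): the exceptional divisor $W_x$ is a $T$-invariant effective Cartier divisor on $\widetilde X$, so its ideal sheaf $\mathcal O_{\widetilde X}(-W_x)\subset\mathcal O_{\widetilde X}$ is a $T$-stable subsheaf; dualizing, $\mathcal O_{\widetilde X}(W_x)$ carries a natural $T$-linearization, and therefore $\mathcal O_{\widetilde X}(m\cdot W_x)$ is $T$-equivariant for every integer $m$. Since $\pi$ is $T$-equivariant, $\pi^{\star}E$ is $T$-equivariant whenever $E$ is, and consequently $(\pi^{\star}E)\otimes\mathcal O_{\widetilde X}(m\cdot W_x)$ is a $T$-equivariant vector bundle on $\widetilde X$ for every $m$.

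I do not expect a genuine obstacle in this argument; it is essentially a copy of the BSDH case with Lemma \ref{lem-wonderful} already guaranteeing the needed structural facts about $T$-invariant curves on $X$. The only points that deserve a line of justification are that the lifted $T$-action in (1) is algebraic—handled by exhibiting it via the linear action on $T_xX$ and the identification $W_x\cong\mathbb P(T_xX)$—and that $\mathcal O_{\widetilde X}(W_x)$ is genuinely $T$-linearizable, which follows from $T$-stability of its ideal sheaf rather than from any general linearization theorem.
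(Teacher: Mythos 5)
Your proposal is correct and follows essentially the same route as the paper: linearize the $T$-action on $T_xX$ to lift the action to the blow-up, rerun the flat-limit/Hilbert-scheme argument from the nef part of Theorem \ref{thm-bsdh} for (2), and use $T$-invariance of $W_x$ to equivariantize $\mathcal{O}_{\widetilde{X}}(m\cdot W_x)$ for (3). The only divergence is in (1): the paper exhibits the linearity of the $T$-action on $T_x(X)$ concretely via the De Concini--Procesi embedding $X\subset\mathbb{P}(V_{2\lambda})$ and the identification $T_x(\mathbb{P}(V_{\lambda}))=\mathfrak{gl}(V_{\lambda})/\mathfrak{p}$, whereas you invoke the general fact that the differential of an action at a fixed point is a linear representation --- both are valid, and otherwise your argument matches the paper's, which simply defers to the proof of Proposition \ref{blowup-bsdh}.
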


\begin{proof}
Note that the center of the blow up $\pi$ is a $T$--fixed point, so the $T$--action lifts to the blow up $\widetilde{X}$.
The rest of the proof is identical to the proof of Proposition \ref{blowup-bsdh}, so we do not give details here.
\end{proof}

Now we work with the notation in Theorem \ref{thm-wonderful}.
Let $E$ be a $T$--equivariant vector bundle on $X$. 

Let $C \,\subset\, X$ be a
$T$--invariant curve. By Lemma \ref{lem-wonderful}, $C$ is isomorphic to the projective line $\mathbb{P}^1$. Hence 
the restriction of $E$ to $C$ is of the form
$$E\big\vert_C\, =\, \mathcal{O}_C(a_1) \oplus \ldots \oplus \mathcal{O}_C(a_n)$$ for some
integers $a_1,\,\cdots,\,a_n$.

\begin{theorem}\label{sc-wonderful}
Let $G$ be a semisimple adjoint type algebraic group. 
Let $\sigma$ be an automorphism of $G$ of order two, and let $H$ be the subgroup of all fixed points of $\sigma$ in $G.$
Assume that $G/H$ has minimal rank. 
Let $X\,:=\,\overline{G/H}$ be the wonderful compactification of $G/H$.
Let $T$ be a $\sigma$--invariant maximal torus of $G$ such that
$\dim T_1$ is maximal (see \eqref{t12}).

Let $E$ be a
$T$--equivariant nef vector bundle on $X$ of rank $n$, and let $x \,\in\, X$ be a $T$--fixed
point. Then 
$$\varepsilon(E,x)
\,=\, \text{min}\{a_i(C)\}_{C,i}\, ,$$ where the minimum is taken over
all $T$--invariant curves $C \,\subset\, X$ passing through $x$ and integers
$\{a_1(C),\,\cdots,\, a_n(C)\}$ such that $E\big\vert_C \,= \,\mathcal{O}_C(a_1(C)) \oplus \cdots \oplus
\mathcal{O}_C(a_n(C))$. 
\end{theorem}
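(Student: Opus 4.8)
The plan is to imitate closely the proof of Theorem \ref{sc-bsdh}, substituting the wonderful compactification structure theory (in particular Lemma \ref{lem-wonderful} and Proposition \ref{blowup-wonderful}) wherever the BSDH-specific facts were used. First I would set up the blow-up $\pi\colon \widetilde{X}\longrightarrow X$ of $X$ at the $T$--fixed point $x$, with exceptional divisor $W_x$, and recall from Proposition \ref{blowup-wonderful}(1) that the $T$--action lifts to $\widetilde{X}$. Writing $\xi=\mathcal{O}_{\mathbb{P}(\pi^{\star}E)}(1)$ and using the diagram \eqref{fig1}, the Seshadri constant is
$$\varepsilon(E,x)\,=\,\sup\{\lambda\ge 0\mid \xi-\lambda q^{\star}(W_x)\ \text{is nef}\}.$$
To test nefness of $\xi-\lambda q^{\star}(W_x)$ it suffices to check nonnegative intersection with every curve $D\subset\mathbb{P}(\pi^{\star}E)$, and exactly as in the proof of Theorem \ref{thm-bsdh} (whose nef half is reproved in Theorem \ref{thm-wonderful}), one can replace $D$ by a linearly equivalent $T$--invariant curve $\widetilde{D}$ whose image $\widetilde{C}=q(\widetilde{D})\subset\widetilde{X}$ is a $T$--invariant curve; here I would invoke Proposition \ref{blowup-wonderful}(2) and the flat-limit argument on the Hilbert scheme, noting that only the $T$--action on $X$ (and its lift) is needed. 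Thus $\xi-\lambda q^{\star}(W_x)$ is nef if and only if $\mathcal{O}_{\mathbb{P}(\pi^{\star}E|_{\widetilde{C}})}(1)-\lambda q_1^{\star}(W_x|_{\widetilde{C}})$ is nef for every $T$--invariant curve $\widetilde{C}\subset\widetilde{X}$, using the restriction diagram \eqref{fig2}.

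Next I would run through the same case analysis on $\widetilde{C}$. If $\widetilde{C}\subset W_x$, then since $W_x\cong\mathbb{P}(T_x X)$ is a projective space with $\mathcal{O}_{\widetilde{X}}(W_x)|_{W_x}=\mathcal{O}_{W_x}(-1)$, we get $-\lambda(W_x|_{\widetilde{C}})=\mathcal{O}_{\widetilde{C}}(\lambda)$, which together with the hypothesis that $E$ is nef on $X$ (hence $\pi^{\star}E$ is nef on $\widetilde X$) makes the bundle in question nef for all $\lambda\ge 0$, so these curves impose no constraint. If $\widetilde{C}\not\subset W_x$, set $C=\pi(\widetilde{C})$, a $T$--invariant curve in $X$; when $x\notin C$ we have $W_x|_{\widetilde{C}}=\mathcal{O}_{\widetilde{C}}$ and nefness is immediate from nefness of $E$. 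When $x\in C$, smoothness of $C$ (Lemma \ref{lem-wonderful} guarantees $C\cong\mathbb{P}^1$) gives $W_x\cdot\widetilde{C}=1$; writing $E|_C=\mathcal{O}_C(a_1(C))\oplus\cdots\oplus\mathcal{O}_C(a_n(C))$ with all $a_i(C)\ge 0$ (as $E$ is nef), the bundle $\mathcal{O}_{\mathbb{P}(\pi^{\star}E|_{\widetilde{C}})}(1)-\lambda q_1^{\star}(W_x|_{\widetilde{C}})$ is nef exactly when $\bigoplus_i\mathcal{O}_{\widetilde{C}}(a_i(C)-\lambda)$ is nef, i.e. when $\lambda\le\min_i a_i(C)$.

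Finally, taking the infimum over all $T$--invariant curves $C\subset X$ through $x$ — of which there are only finitely many by Lemma \ref{lem-wonderful} — yields $\varepsilon(E,x)=\min\{a_i(C)\}_{C,i}$, as claimed. The only subtle point, and the place where the wonderful-compactification case genuinely differs from the BSDH case, is the justification of the flat-limit/Hilbert-scheme argument that produces the $T$--invariant curve $\widetilde{C}$ on the blow-up $\widetilde{X}$: this requires knowing that the $T$--action on $X$ lifts to $\widetilde X$, which is precisely Proposition \ref{blowup-wonderful}(1), and that a basis of one-parameter subgroups of $T$ lets one successively take limits until the limiting cycle is $T$--invariant. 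Since $\widetilde X$ is projective and the Hilbert scheme of curves is $T$--equivariant, this goes through verbatim. I expect the writing of the proof to be essentially routine once Proposition \ref{blowup-wonderful} is in hand; the main obstacle was already overcome in establishing that proposition, namely verifying that $T$ acts linearly on $T_x(X)$ via the embedding $X\subset\mathbb{P}(V_{2\lambda})$ so that the blow-up carries a lifted $T$--action.
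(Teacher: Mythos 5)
Your proposal is correct and follows essentially the same route as the paper: the paper's proof of Theorem \ref{sc-wonderful} simply invokes the argument of Theorem \ref{sc-bsdh} verbatim, with Proposition \ref{blowup-wonderful} supplying the lift of the $T$--action to the blow-up and Lemma \ref{lem-wonderful} supplying the structure of the $T$--invariant curves, exactly as you describe. Your case analysis on $\widetilde{C}$ (contained in $W_x$, disjoint from $x$, or passing through $x$) and the identification of the subtle point (linearity of the $T$--action on $T_x(X)$ via the embedding in $\mathbb{P}(V_{2\lambda})$) match the paper's reasoning.
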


\begin{proof}
We argue as in the proof of Theorem \ref{sc-bsdh}. 

Let $W_x$ denote the exceptional divisor of the blow-up $$\pi\,:\, \widetilde{X} \,\longrightarrow\,
X $$ at the point $x \,\in\, X$. By definition, the Seshadri constant of $E$ at
$x$ is the following (see \eqref{fig1}): 
$$\varepsilon(E,x) \,=\, \text{sup}\{\lambda \,\ge\, 0 \ \mid\ \xi - \lambda q^{\star}(W_x)\
\text{~is nef}\}.$$

To prove that $\xi - \lambda q^{\star}(W_x)$ is nef for a given $\lambda$, we need
to show that $$(\xi - \lambda q^{\star}(W_x)) \cdot D \,\ge\, 0,$$ for every
curve $D \,\subset\, \mathbb{P}(\pi^{\star}E)$. As before, there exists a $T$--invariant
curve $\widetilde{D} \,\subset\, \mathbb{P}(\pi^{\star}E)$ which is linearly
equivalent to $D$ and satisfies the condition that 
$\widetilde{C} \,:=\, q(\widetilde{D}) \,\subset\, \widetilde{X}$ is a $T$--invariant curve. 

As in the proof of Theorem \ref{sc-bsdh}, $\xi - \lambda q^{\star}(W_x)$ is nef 
if and only if $\mathcal{O}_{\mathbb{P}(\pi^{\star}E\big\vert_{\widetilde{C}})}(1)-\lambda
q_1^{\star}(W_x\big\vert_{\widetilde{C}})$ 
is nef for every $T$--invariant curve $\widetilde{C} \,\subset\,
\widetilde{X}$; see the diagram in \eqref{fig2}. 

We can now complete the proof exactly as done in Theorem \ref{sc-bsdh}. 
\end{proof}

\begin{remark}\label{rem-wonderful}\rm
Let $X\,=\,\overline{G/H}$ be as in Theorem \ref{sc-wonderful}, and let $E$ be an ample $T$--equivariant vector bundle on $X$. 
Arguing as in Remark \ref{rem-bsdh}, we have 
$$\varepsilon(E,x) \,\ge\, 1,$$ for a very general point $x \,\in\, X$. 
\end{remark}

\section*{Acknowledgements}
We are very grateful to the referee for a careful reading and many useful suggestions which improved the paper. 
The second and the third authors are partially supported by a grant from Infosys Foundation.
The first author is partially supported by a J. C. Bose Fellowship.

\section*{Conflict of interest statement}
On behalf of all authors, the corresponding author states that there is no conflict of interest.

\end{document}